



\documentclass[12pt]{amsart}
\usepackage[margin=1.05in]{geometry}                
\geometry{letterpaper}                   
\usepackage{graphicx}
\usepackage{url}
\usepackage{amssymb}
\usepackage{epstopdf}
\usepackage{color}
\usepackage{subcaption}
\usepackage{comment}
\usepackage{tikz}
\usepackage{algorithm,algpseudocode,algorithmicx}
\usepackage{multirow}
\DeclareGraphicsRule{.tif}{png}{.png}{`convert #1 `dirname #1`/`basename #1 .tif`.png}
\usepackage{amsmath}
\tikzstyle{vertex}=[circle, draw, inner sep=0pt, minimum size=6pt, fill=black]
\newcommand{\vertex}{\node[vertex]}
\tikzstyle{wvertex}=[circle, draw, inner sep=0pt, minimum size=6pt, fill=white]
\newcommand{\wvertex}{\node[wvertex]}



\DeclareMathOperator{\identity}{Id}
\DeclareMathOperator{\diag}{diag}
\DeclareMathOperator{\rank}{rank}


\newcommand{\calf}{\mathcal{F}}

\newcommand{\CC}{\mathbb{C}}

\newcommand{\rr}{\mathbb{R}}
\newcommand{\cc}{\mathbb{C}}

\newcommand{\nn}{\mathbb{N}}
\newcommand{\kk}{\mathbb{K}}


\numberwithin{equation}{section}

\makeatother

\theoremstyle{plain}
\newtheorem{thm}{Theorem}[section]

\newtheorem{prop}[thm]{Proposition}

\newtheorem*{thm*}{Theorem}
\newtheorem*{lemma*}{Lemma}
\newtheorem*{prop*}{Lemma}
\newtheorem*{cor*}{Corollary}
\newtheorem*{conj*}{Conjecture}

\theoremstyle{definition}
\newtheorem{defn}[thm]{Definition}

\newtheorem*{defn*}{Definition}
\newtheorem{example}[thm]{Example}
\newtheorem{notation}[thm]{Notation}
\newtheorem{remark}[thm]{Remark}
\newtheorem{pr}[thm]{Problem}

\newtheorem{problem}[thm]{Problem}

\theoremstyle{remark}

\newcommand{\HilbertSpace}{H}

\newcommand{\W}{W}

\newcommand{\X}{X}

\newcommand{\Xnr}{\X_{n,r}}

\DeclareMathOperator{\core}{core}
\DeclareMathOperator{\gcore}{\overline{core}}

\usepackage{etoolbox}
\let\bbordermatrix\bordermatrix
\patchcmd{\bbordermatrix}{8.75}{4.75}{}{}
\patchcmd{\bbordermatrix}{\left(}{\left[}{}{}
\patchcmd{\bbordermatrix}{\right)}{\right]}{}{}

\begin{document}
\begin{abstract}
A finite unit norm tight frame is a collection of $r$ vectors in 
$\mathbb{R}^n$ that generalizes the notion of orthonormal bases. 
The affine finite unit norm tight frame variety is the Zariski closure of the set of finite unit norm tight frames.
Determining the fiber of a projection of this variety onto a set of coordinates is called the algebraic finite unit norm tight frame completion problem.
Our techniques involve the algebraic matroid of an algebraic variety, which encodes the dimensions of fibers of coordinate projections.
This work characterizes the bases of the algebraic matroid underlying the variety of  finite unit norm tight frames in $\rr^3$.
Partial results towards similar characterizations for finite unit norm tight frames in 
 $\rr^n$ with $n \ge 4$ are also given. {We provide a method to bound the degree of the projections based off of combinatorial~data.}

\end{abstract}

\title{The algebraic matroid of the finite unit norm tight frame (funtf) variety}

\keywords{Algebraic matroid, frame completion}

\subjclass[2010]{05B35}


\author{Daniel Irving Bernstein}
\address{Institute for Data, Systems, and Society, Massachusetts Institute of Technology, 77 Massachusetts Avenue, Cambridge, MA 02139}
\email{dibernst@mit.edu}\urladdr{https://dibernstein.github.io}

\author{Cameron Farnsworth}
\address{Department of Mathematics,         Texas State University,  601 University Drive, San Marcos, TX 78666-4684, USA.}
\email[Corresponding author]{clf129@txstate.edu}

\author{Jose Israel Rodriguez}
\address{Department of Mathematics,         University of Wisconsin-Madison,  480 Lincoln Drive, Madison WI 53706-1388, USA.}
\email {jose@math.wisc.edu}\urladdr{http://www.math.wisc.edu/~jose/}

\maketitle

\section{Introduction}
{
A set of unit vectors $\{w_1,\dots,w_r\}$  in 
$\mathbb{R}^n$ is said to be a \emph{finite unit norm tight frame} (or \emph{funtf}) if 
the matrix 
$W:=\left[\begin{smallmatrix} w_{1} & w_{2} & \dots & w_{r}\end{smallmatrix} \right]$
satisfies $\frac{n}{r} WW^T = \identity_n$, where $\identity_n$ is the $n\times n$ identity matrix.
This generalizes the notion of orthonormal bases for $\mathbb{R}^n$.
The Zariski closure (over $\cc$) of the set of $n\times r$ matrices that are finite unit norm tight frames forms an algebraic variety in $\CC^{n\times r}$,
which we  denote by $\Xnr$. The variety $\Xnr$ can be expressed as an algebraic set
(see \cite[p.~4]{CSalgframes} and \cite[Equation 4]{ORSfradeco}) as follows:
  \begin{align}\label{eq:xnrEquations}
	\Xnr =
	\{
	\W\in\CC^{n\times r}& :
	\W\W^T=\frac{r }{n }\identity_n, 
	\,\diag(\W^T\W)=\diag(\identity_r)
	\},
\end{align}
where $\diag(M)$ denotes the diagonal entries of a matrix $M$.
In this paper, we characterize the algebraic matroid underlying $X_{3,r}$  and provide additional partial results for $X_{n,r}$.
}

The remainder of this section describes the applied motivation for our work here and gives a literature review of the area. 
In Section~\ref{sec:algMatroids}, we provide the minimum necessary background on algebraic matroids.
In Section~\ref{sec:reformulate} we set up our notation and collect previous results we will need about $\Xnr$.
In particular, we discuss how matroids arise in connection with algebraic frame completion problems.
In Section~\ref{sec:matroidFuntf}, we provide our main results on the algebraic matroid underlying $\Xnr$  (Theorem~\ref{thm:mustBeConnected}).
Section \ref{sec:degrees} gives a recursive formula for computing the degree of a finite-to-one coordinate projection of $\Xnr$ (Theorem~\ref{thm:degreeOfProjection})
which we then use to completely characterize the degrees of projection onto a basis of~$X_{3,r}$.

\subsection{Algebraic frame theory}
{Frames  generalize the notion of a basis of a vector space and have found use in numerous fields of science and engineering.}
{
Given a Hilbert space $\HilbertSpace$, a \emph{frame} is a set of elements $\{f_k\}_{k\in I}\subset \HilbertSpace$
such that there exist real numbers $A,B$ such that  $0<A\leq B<\infty$ and for every $h\in \HilbertSpace$
\[
A\|h\|^2\leq\sum_{k\in I}\vert\langle h,f_k\rangle\vert^2\leq B\|h\|^2.
\]
}%
These frame conditions are given by Duffin and Schaeffer in \cite[Equation (4)]{DSframeconditions}. If $A=B$, then the frame is called \emph{tight}.
If $\HilbertSpace$ is $n$-dimensional, then any frame has at least $n$ elements.
A frame where each element has norm one is said to be a \emph{unit norm frame}. 
In the literature unit norm frames are also known as normalized frames, uniform frames, and spherical frames. {Throughout this text, we identify the sequence of vectors in a frame with the matrix whose columns are the vectors in the frame. The frames we are interested in are frames of the Hilbert space $\mathbb{R}^n$; however, it is important to keep in mind that we are making a relaxation to a question regarding an algebraic variety in $\mathbb{C}^{n\times r}$.}

A finite frame which is both tight and unit norm is also called {a \emph{finite unit norm tight frame} and is commonly abbreviated in the literature as \emph{funtf}.}
Such frames are the focus of much research because they minimize various measures of error in signal reconstruction \cite{CKframeerasures,Goyal,GVTovercomplete,HolmesPaulsen}.
Algebraic frame theory uses the powerful tools of computational algebraic geometry to solve problems involving finite frame varieties. Such approaches have found success in~\cite{CMSframes,DykemaStrawn,ORSfradeco,Strawnframes}.


Given an $n\times r$ matrix where only a subset of the entries are observed,
the \emph{finite unit norm tight frame completion problem} asks for values of the missing entries such that the resulting completed matrix is a funtf.
The jumping off point for this work is the relaxation of this problem that allows for the missing entries to take on complex values.
We call this relaxation the \emph{algebraic finite unit norm tight frame completion problem}.

Complex frames are also studied where a Hermitian inner product is used, but that is not the focus of this article.
Studying the variety $\Xnr$ in place of the set of finite unit norm tight frames gives one access to tools from algebraic geometry,
and results about $\Xnr$ can lead to insight about the set of finite unit norm tight frames---see for example \cite{CSalgframes}.
Many works have studied the properties of various sets of frames considered as varieties.
{For example, dimensions of $(\mu,S)$-frame varieties, which are spaces of matrices $W=[w_1\cdots w_r]$, real or complex, satisfying $WW^*=S$
for some Hermitian (symmetric) positive definite matrix $S$ such that $\|w_k\|=\mu_k$, were considered in \cite{StrawnMasterThesis}. Finite unit norm tight frames are a special case of these $(\mu,S)$-frames where $\mu_i=1$ and $S$ is a scalar multiple of the identity matrix.} Along with the fundamental groups, the dimensions of finite unit norm tight frame varieties were derived in \cite{DykemaStrawn}. 

 In \cite{Strawnframes}, nonsingular points of $(\mu,S)$-frames are characterized along with the tangent spaces at these nonsingular points on these varieties. The connectivity of the finite unit norm tight frame variety along with its irreducibility are studied in \cite{CMSframes}. In \cite{HagaPegel}, the polytope of eigensteps of finite equal norm tight frames is studied.
{ These \emph{eigensteps} are sequences of interlacing spectra used by \cite{CFMPSframes} to construct finite frames of prescribed norms}
and the dimension of finite unit norm tight frame varieties is noted to be related to the dimension of these polytopes.

\subsection{Algebraic finite unit norm tight frame completion}\label{ss:afc}
Due to their robustness to erasures and additive noise, unit norm tight frames play an important role in signal processing.
Explicit constructions for unit norm tight frames are quite recent despite theoretical work regarding existence being quite classical.
{The \emph{Schur-Horn Theorem}~\cite{Horn,Schur} characterizes the pairs $(\lambda,\mu)$ such that there exists
a frame whose frame operator has spectrum $\lambda$ and lengths $\mu$.}
However, explicit constructions for these frames have remained scarce.
In \cite{GKKframeerasures} the authors give a constructive characterization for all unit norm tight frames in $\rr^2$
and provide a construction technique known as \emph{harmonic frames} for {unit norm tight} frames in $\rr^n$.
An alternative constructive technique called \emph{spectral tetris} is given in \cite{CFMWZfusionframes}.
An explicit construction of every unit norm tight frame was finally given by \cite{CFMPSframes,FMPSframes}.

The previous paragraph covers results on explicitly constructing frames with prescribed spectrum and whose vectors' lengths are prescribed.
However, what if you have specific vectors you want included in your frame? How do you complete this partial set of vectors into a tight frame?
The work
 \cite{FWWgeneratingframes} answers how many vectors must be added to complete your set of vectors into a tight frame, and in the case when all vectors are unit norm, they also provide a lower bound (which is not sharp) for the number of vectors required to complete the set of vectors into a tight frame.  The minimum number of vectors needed to add to your set of vectors to complete it to a frame when their norms are prescribed is provided by \cite{MRframecompletions}. In both papers, it is assumed that you start with a set of vectors. {See also~\cite{MR3473144}, in which the authors characterize the spectra of all frame operators of frames completed from the addition of extra vectors to another frame and discuss completion in such a way as to produce a frame minimizing the condition number, the mean squared reconstruction error, and the frame potential.} 

In this paper, we take a different approach than Feng, Wang, and Wang, {Massey and Ruiz, or Fickus, Marks, and Poteet.} Instead of starting with a set of vectors and asking how many more vectors are needed to have a tight frame,  we have the following generalization of the problem.

\begin{problem}\label{pr:funtfCompletion}
Given a partially observed $n\times r$ matrix,
determine if the missing entries can be completed such that the columns form a finite unit norm tight frame.
\end{problem}

{
A generalization of Problem~$\ref{pr:funtfCompletion}$ is to ask how many completions there are. 

\begin{problem}\label{pr:funtfDegree}
Given some known entries of an  $n\times r$ matrix, 
find the cardinality of the number of completions 
such that the columns form a finite unit norm tight frame.
\end{problem}

When studying the algebraic geometry of low-rank matrix completion,
one often studies the analogue of Problems \ref{pr:funtfCompletion} and \ref{pr:funtfDegree},
where instead of completing to a finite unit norm tight frame,
one completes to a matrix of a particular specified low rank \cite{kiraly2015algebraic,blekherman2019maximum,bernstein2017completion,bernstein2019typical,bernstein2020typical}.

Problem \ref{pr:funtfDegree} can be studied using algebraic geometry 
by considering the known entries as a set of defining equations of an affine linear space $\mathcal{L}$ in $\mathbb{C}^{n\times r}$ and then studying the degree of 
$\Xnr\cap \mathcal{L}$.
When  $\Xnr\cap \mathcal{L}$ is finite a set of points,
the degree of $\Xnr\cap \mathcal{L}$ is the number of points, counted with multiplicity,
and is an upper bound for the number of completions.  
The degree of $\Xnr\cap \mathcal{L}$ need not coincide with the degree of $\Xnr$,
but when the linear space $\mathcal{L}$ is generic such that $\Xnr\cap \mathcal{L}\neq \emptyset$, 
the degree of $\Xnr\cap \mathcal{L}$ and $\Xnr$ are the same. 
For $n=r$, the variety $\Xnr$ is the orthogonal group and the degree of this variety was determined 
in \cite{BBBKR2017} using representation theory.
An analogous question from rigidity theory
asks for the number of realizations of a rigid graph.
This same algebraic approach for getting an upper bound works in this context,
this time by computing or bounding the degree of (coordinate projections of) the Cayley-Menger variety \cite{jackson2012number,emiris2009algebraic,capco2018number,borcea2004number}.

After solving Problem \ref{pr:funtfCompletion},
the most natural next problem is to actually compute such a completion, if it exists.
One can attempt to do this using Gr\"obner bases algorithms, but this is likely to be prohibitively slow.
A faster way to do this would be using numerical algebraic geometry
using e.g. \texttt{Bertini} \cite{BHSW06}, \texttt{PHCPack} \cite{verschelde1999algorithm}, or \texttt{Macaulay2} \cite{MR2881262,M2}.
Knowing the degree of coordinate projections of $\Xnr$ could be helpful in this approach.
One could could also try using the combinatorics of the set of known entries
to derive polynomials in the missing entries that could be solved to complete
the finite unit norm tight frame.
In the language of algebraic matroids,
this is the problem of finding combinatorial descriptions of the circuit polynomials of $\Xnr$.
This would be an interesting future direction.
}

\section{The basics of algebraic matroids}\label{sec:algMatroids}
We now take a detour to introduce the minimum necessary background on algebraic matroids.
Since the only matroids considered in this paper will be algebraic,
we will not discuss or define abstract matroids.
Moreover, our study will be limited to those that are algebraic over $\rr$ or $\cc$.
The reader who is interested in learning about more general (algebraic) matroids is advised to consult the textbook \cite{oxley2006matroid}.

Let $\kk$ be a field.
Given a finite set $E$,
we let $\kk^E$ denote the vector space whose coordinates are indexed by the elements of $E$.
Each subset $S \subseteq E$ of coordinates is associated with the linear projection $\pi_S: \kk^E \rightarrow \kk^S$
that sends each point $(x_e)_{e \in E}$ to $(x_e)_{e \in S}$.
The ring of polynomials with coefficients in $\kk$ and indeterminants indexed by $E$ will be denoted $\kk[x_e: e \in E]$
and the corresponding field of rational functions will be denoted $\kk(x_e: e \in E)$.
The ideal in $\kk[x_e: e \in E]$ generated by a finite set of polynomials $f_1,\dots,f_k \in \kk[x_e : e \in E]$
will be notated as $(f_1,\dots,f_k)$.
Given a set $X \subseteq \kk^E$, we let $I(X)$ denote the ideal of all polynomial functions that vanish on $X$.
\begin{defn}\label{defn:algebraicMatroid}
	Let $E$ be a finite set, let $\kk$ be $\rr$ or $\cc$, and let $X \subseteq \kk^E$ be an irreducible variety.
    A subset of coordinates $S \subseteq E$ is
    \begin{enumerate}
        \item \emph{independent} in $X$ if $I(\pi_S(X))$ is the zero ideal
        \item \emph{spanning} in $X$ if $\dim(\pi_S(X)) = \dim(X)$
        \item\label{item:basis} a \emph{basis} of $X$ if $S$ is both independent and spanning.
    \end{enumerate}
    Any one of the three set systems consisting of the independent sets,
    the spanning sets, or the bases of an irreducible variety determines the other two.
    The combinatorial structure specified by any one of these set systems is called the \emph{algebraic matroid underlying $X$}.
\end{defn}

\begin{example}
    Let $X \subset \rr^{[4]}$ be the linear variety defined by the vanishing of the linear forms $x_1 - 5x_2 = 0$ and $x_3+2x_4 = 0$.
    The bases of $X$ are
    \[
        \{1,3\}, \quad \{1,4\}, \quad \{2,3\}, \quad \{2,4\}.
    \]
    The independent sets of $X$ are the subsets of the bases,
    and the spanning sets are the supersets.
    Moreover, all the bases have cardinality two, which is also the dimension of $X$.
    This is not a coincidence---see Proposition \ref{prop:basesSameSize} below.
\end{example}

We now describe the intuition behind the algebraic matroid underlying an irreducible variety $X \subseteq \kk^E$.
When $S \subseteq E$ is independent,
the coordinates $(x_e)_{e \in S}$ can be given arbitrary generic values, and the resulting vector can be completed
to a point in $X$.
When $S \subseteq E$ is spanning and $x \in X$ is generic,
then the coordinates $(x_e)_{e\notin S}$ can be determined by solving a zero-dimensional system of polynomials
whose coefficients are polynomials in $(x_e)_{e \in S}$.
In other words, the set $\pi_S^{-1}(\pi_S(x)) \cap X$ is generically finite.

It's important to recall Definition~\ref{defn:algebraicMatroid} requires  $X$ to be an irreducible variety.
This ensures that the algebraic matroid underlying $X$ is indeed a matroid (see for example \cite[Proposition 1.2.9]{bernstein2018matroids}).
Proposition \ref{prop:basesSameSize} below then follows from the fact that all bases of a matroid have the same size \cite[Chapter 1]{oxley2006matroid}.

\begin{prop}\label{prop:basesSameSize}
{For any basis $B\subseteq E$ of an irreducible variety $X\subseteq \mathbb{K}^E$, the cardinality of the set $B$ equals $\dim(X)$.}
\end{prop}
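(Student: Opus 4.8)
The plan is to unwind Definition~\ref{defn:algebraicMatroid} for a basis $B$ into two statements about $\dim(\pi_B(X))$ and then read off the equality $|B| = \dim(X)$ directly. First I would record the elementary dimension facts: the image $\pi_B(X) \subseteq \kk^B$ of the irreducible variety $X$ under a polynomial map is a constructible set (by Chevalley's theorem over $\cc$; by Tarski--Seidenberg over $\rr$), its Zariski closure $\overline{\pi_B(X)}$ is therefore an irreducible variety with $\dim \overline{\pi_B(X)} = \dim \pi_B(X)$, and $I(\pi_B(X)) = I(\overline{\pi_B(X)})$.

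Now apply the two halves of the hypothesis. Since $B$ is independent, $I(\pi_B(X)) = (0)$; because $\kk$ is infinite, the only subset of $\kk^B$ with vanishing ideal $(0)$ is one that is Zariski dense, so $\overline{\pi_B(X)} = \kk^B$, whence $\dim \pi_B(X) = |B|$. Since $B$ is spanning, by definition $\dim \pi_B(X) = \dim X$. Chaining the two equalities yields $|B| = \dim \pi_B(X) = \dim X$, which is the claim.

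An equivalent route, which also explains why the three set systems of Definition~\ref{defn:algebraicMatroid} genuinely form a matroid, passes through the function field $\kk(X)$: viewing each coordinate $x_e$ as an element of $\kk(X)$, one checks that $S$ is independent exactly when $\{x_e : e \in S\}$ is algebraically independent over $\kk$, and spanning exactly when $\kk(X)$ is algebraic over $\kk(x_e : e \in S)$, so that the bases of $X$ are precisely the transcendence bases of $\kk(X)/\kk$; all of these have cardinality $\operatorname{trdeg}_{\kk}\kk(X) = \dim X$. Alternatively, as already noted in the text, one may simply combine the cited fact that the algebraic matroid is a matroid with the standard matroid axiom that all bases have the same size \cite{oxley2006matroid}.

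I do not expect a genuine obstacle here, since the statement repackages standard dimension theory. The only points needing a little care are (i) confirming in the real case that $I(Y) = (0)$ forces $Y$ to be Zariski dense (immediate as $\rr$ is infinite, or by complexifying), and (ii) the identity $\dim \pi_B(X) = \dim\overline{\pi_B(X)}$ for the constructible image, which is where Chevalley's theorem (or Tarski--Seidenberg) is invoked; everything else is formal.
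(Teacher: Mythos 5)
Your argument is correct, but it takes a more self-contained route than the paper does. The paper disposes of this proposition in one sentence: since $X$ is irreducible, the independent sets form a genuine matroid (citing \cite[Proposition 1.2.9]{bernstein2018matroids}), and all bases of a matroid have the same cardinality \cite[Chapter 1]{oxley2006matroid}. Your primary argument instead unwinds Definition~\ref{defn:algebraicMatroid} directly: independence forces $I(\pi_B(X))=(0)$, hence $\overline{\pi_B(X)}=\kk^B$ and $\dim\pi_B(X)=|B|$, while spanning gives $\dim\pi_B(X)=\dim(X)$, and chaining yields the claim. This is arguably the \emph{better} proof of the precise statement as written: the matroid axiom by itself only delivers that all bases are equicardinal, not that the common cardinality is $\dim(X)$; pinning down that value requires exactly the definitional computation you perform (or, equivalently, your second route identifying bases with transcendence bases of $\kk(X)/\kk$, whose common size is $\operatorname{trdeg}_\kk \kk(X)=\dim X$). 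What the paper's route buys is brevity and the reminder that irreducibility is the hypothesis making the matroid structure exist at all; what yours buys is that it proves the stated equality without leaning on the citation, and your care about $\dim\pi_B(X)=\dim\overline{\pi_B(X)}$ via Chevalley (resp.\ Tarski--Seidenberg over $\rr$) is a legitimate, if routine, point that the paper leaves implicit. You also note the paper's route as an alternative, so nothing is missing.
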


Given finite sets $A$ and $B$ and a field $\kk$, we let $\kk^{A\times B}$ denote the set of matrices with entries in $\kk$
whose rows are indexed by elements of $A$ and whose columns are indexed by elements of $B$.
Given polynomials $f_1,\dots,f_k\in \kk[x_e: e \in E]$
the \emph{Jacobian matrix of $f_1,\dots,f_k$} is the matrix $J(f_1,\dots,f_k) \in (\kk(x_e: e \in E))^{[k]\times E}$
whose $(i,e)$ entry is the partial derivative $\frac{\partial f_i}{\partial x_e}$.
In Section~\ref{sec:matroidFuntf}, we will often work with submatrices of a Jacobian matrix.
For this reason, we introduce the following notation.
\begin{notation}
Let $M$ denote a matrix whose columns are indexed by a set $E$.
The submatrix of a given $M$ with columns corresponding to the elements of a subset $S$ of $E$
is denoted~$M_{S}$.
\end{notation}

The following proposition is useful for computing the bases of the algebraic matroid underlying a given irreducible variety.
It is well known, and usually stated in terms of matroid duals.
We state it here in more elementary terms for the purposes of keeping the necessary matroid theory background at a minimum.
{For more details, see \cite[Section 2.2]{rosen2014computing} and \cite[Proposition 6.7.10]{oxley2006matroid}.}

\begin{prop}\label{prop:useJacobianToDetermineIfBasis}
    Let $E$ be a finite set, let $\kk$ be $\rr$ or $\cc$ and let $X \subseteq \kk^E$ be an irreducible variety of dimension $d$
    such that $I(X) = (f_1,\dots,f_k)$.
    A subset $S \subseteq E$ of size $d$ is a basis of $X$ if and only if the rank of $J(f_1,\dots,f_k)_{E\setminus S}$ is $|E|-d$.
\end{prop}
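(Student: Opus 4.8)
The plan is to express both conditions defining a basis in terms of the tangent space of $X$ at a generic point, where the submatrices $J(f_1,\dots,f_k)_{E\setminus S}$ will appear of their own accord. One preliminary point: ``the rank of $J(f_1,\dots,f_k)_{E\setminus S}$'' should be read as the rank of this polynomial matrix over the function field $\kk(X)$ of the irreducible variety $X$, equivalently its rank at a generic point of $X$ (the generic rank \emph{on $X$}). This need not equal the rank over $\kk(x_e : e\in E)$, and the hypothesis $I(X) = (f_1,\dots,f_k)$ --- as opposed to a proper sub-ideal or a non-radical ideal --- is precisely what forces these generic notions to agree and makes the statement correct.

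First I would collect the standard facts. Since $\kk$ has characteristic zero, the smooth locus of $X$ is a dense open subset; let $p$ be a generic, hence smooth, point of $X$ and write $J := J(f_1,\dots,f_k)$. Because the $f_i$ generate all of $I(X)$, the Zariski tangent space is cut out by the linearized equations, i.e.\ $T_pX = \ker J(p) \subseteq \kk^E$; smoothness of $p$ then gives $\dim \ker J(p) = \dim X = d$, so $\rank J(p) = |E| - d$, which by irreducibility is the rank of $J$ over $\kk(X)$. (For $\kk = \rr$ one should either extend scalars to $\cc$ or rephrase this step via the module of K\"ahler differentials $\Omega_{\kk(X)/\kk}$, so as not to worry about real smooth points.) I would also observe that a size-$d$ independent set $S$ is automatically a basis: by Definition~\ref{defn:algebraicMatroid}, independence of $S$ means $\overline{\pi_S(X)} = \kk^S$, whence $\dim \pi_S(X) = |S| = d = \dim X$, so $S$ is spanning; conversely a basis is independent by definition. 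Thus it remains to show that a size-$d$ subset $S$ is independent if and only if $\rank J_{E\setminus S} = |E| - d$.

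The main step is to translate independence into surjectivity of a coordinate projection restricted to $T_pX$. In characteristic zero, $\pi_S\colon X \to \kk^S$ is dominant if and only if its differential $d\pi_S|_p$ is surjective onto $\kk^S$ for generic $p$: by generic smoothness of the dominant morphism $X \to \overline{\pi_S(X)}$, the rank of $d\pi_S|_p$ equals $\dim \overline{\pi_S(X)}$ for generic $p$, and this common value equals $|S|$ exactly when $\overline{\pi_S(X)} = \kk^S$. Now $d\pi_S|_p$ is simply the restriction to $T_pX = \ker J(p)$ of the coordinate projection $\kk^E \to \kk^S$; since $\dim \ker J(p) = d = |S|$, this restriction is onto precisely when its kernel is trivial, and that kernel is $\ker J(p) \cap \kk^{E\setminus S}$, where $\kk^{E\setminus S} \subseteq \kk^E$ is the coordinate subspace of vectors supported off $S$. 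But $\ker J(p) \cap \kk^{E\setminus S}$ is exactly $\ker\!\big(J(p)_{E\setminus S}\big)$, since for a vector supported on $E\setminus S$ only the columns of $J(p)$ indexed by $E\setminus S$ contribute. Hence $S$ is independent if and only if $J(p)_{E\setminus S}$ has trivial kernel, i.e.\ full column rank $|E\setminus S| = |E| - d$; passing from the generic point $p$ back to $\kk(X)$ gives the claimed rank condition.

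I do not expect a single hard obstacle: the result is an assembly of standard facts, and in fact repackages the assertion that the algebraic matroid of $X$ is the dual of the linear matroid on the columns of $J$ over $\kk(X)$, from which it also drops out via ordinary matroid duality. The points that genuinely need care are (i) invoking $I(X) = (f_1,\dots,f_k)$, rather than some sub-ideal, to get $T_pX = \ker J(p)$; (ii) the characteristic-zero equivalence of ``$\pi_S$ dominant'' with ``$d\pi_S$ surjective at a generic point''; and (iii) pinning down that ``rank'' means the generic rank on $X$ and not the rank over the ambient rational function field.
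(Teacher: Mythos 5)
Your argument is correct, and it is essentially the standard one. The paper does not actually prove this proposition --- it is stated as well known and delegated to the references (Rosen's notes and Oxley), where it appears as the statement that the algebraic matroid of $X$ is, in characteristic zero, the linear matroid of the tangent space at a generic point, whose dual is represented by the Jacobian; your closing remark correctly identifies this. Your direct route (generic smooth point $p$, $T_pX=\ker J(p)$ of dimension $d$, independence of $S$ $\Leftrightarrow$ dominance of $\pi_S$ $\Leftrightarrow$ surjectivity of $d\pi_S|_p$ $\Leftrightarrow$ triviality of $\ker J(p)\cap \kk^{E\setminus S}=\ker J(p)_{E\setminus S}$) is complete modulo the standard characteristic-zero facts you cite, and your observation that a size-$d$ independent set is automatically a basis correctly reduces the claim to the independence condition. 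You are also right that the rank must be read as the rank at a generic point of $X$ (over $\kk(X)$) for the statement to be true. One side remark is wrong, though: the hypothesis $I(X)=(f_1,\dots,f_k)$ does \emph{not} force the rank over $\kk(X)$ to agree with the rank over the ambient field $\kk(x_e:e\in E)$. For example, $X=V(x)\subseteq\cc^2$ has $I(X)=(x)=(x,xy)$, and the Jacobian of the generating set $\{x,xy\}$ has ambient generic rank $2$ but rank $1$ on $X$; with $S=\{1\}$ the submatrix $J_{\{2\}}=(0,x)^T$ has ambient rank $1=|E|-d$ even though $S$ is not a basis. What the hypothesis actually buys you is only what your main argument uses, namely $T_pX=\ker J(p)$; since the rest of your proof works entirely with the rank at a generic point of $X$, this slip does not affect the conclusion.
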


\section{Algebraic matroids to algebraic funtf completion}\label{sec:reformulate}
\renewcommand{\rr}{r}
\renewcommand{\nn}{n}

Recall from Section~\ref{eq:xnrEquations} that the Zariski closure in $\CC^{\nn\times\rr}$ of the
set of
$\nn\times\rr$ matrices  such that the columns form a finite unit norm tight frame is
denoted by $\Xnr$. In other words,
this paper studies the following algebraic relaxation of Problem \ref{pr:funtfCompletion}.
\begin{pr}[The algebraic frame completion problem]\label{pr:algebraicFuntfCompletion}
	Given some known entries of an $n\times r$ matrix,
	determine if the matrix can be completed to an element of $\Xnr$.
\end{pr}

We say $\Xnr$ is an \emph{affine finite unit norm tight frame variety} and call a matrix in $\Xnr$ a \emph{finite unit norm tight frame (funtf) matrix}.
The  $\binom{n+1}{2} + r$ { scalar equations defining $\Xnr$ shown in \eqref{eq:xnrEquations}
were found in \cite[p.~4]{CSalgframes} and \cite[Equation 4]{ORSfradeco}.} \color{black}
We will express the polynomials defining the affine finite unit norm tight frame variety in the ring $\kk[x_{ij}: 1\le i \le n, 1 \le j \le r]$ where $x_{ij}$ will represent the $ij$ entry of a matrix.
\newcommand{\normPoly}{g}
Indeed, the column norm constraints on $W$ can be expressed as the following $\rr$ polynomials set to zero:
\begin{equation}\label{eqG}
(\normPoly_1,\dots,\normPoly_\rr) :=\diag(\W^T\W-\identity_\rr),
\end{equation}
{while the orthogonal row constraints on $W$ can be expressed as the following $\binom{n+1}{2}$ polynomials $f_{ij}$, $i\leq j$:
\begin{equation}\label{eqF}
\begin{bmatrix}
f_{11} &f_{12}&\ldots&f_{1n}\\
f_{12} &f_{22}&\ldots&f_{2n}\\
\vdots& 		&\ddots	&\vdots	\\
f_{1n} &f_{2n}&\ldots&f_{nn}
\end{bmatrix} : =WW^T-\frac{\rr}{\nn}\identity_\nn.
\end{equation}
}
{
    Elementary algebra shows that these polynomials satisfy the following relation
    	\begin{equation}\label{eq:fgRelation}
	 \sum_{i=1}^n f_{i,i} = \sum_{j=1}^r g_j = ||W||_F^2-r,
	\end{equation}
	where $||\cdot ||_F$ denotes the Frobenius norm.
}

The problem of algebraic finite unit norm tight frame completion can be cast as the problem of projecting an affine finite unit norm tight frame variety to a subset of coordinates.
Let $E \subseteq [n]\times [r]$ denote a subset of coordinates of $\CC^{\nn\times\rr}$.
We will think of $E$ as indexing ``known'' entries,
and the algebraic finite unit norm tight frame completion problem is to determine the remaining ``unknown'' entries
so that the completed matrix is a finite unit norm tight frame.
Let $\pi_E$
denote the respective coordinate projection.
The algebraic finite unit norm tight frame completions of a given $M \in \cc^E$ are the elements of the fiber $\pi^{-1}_E(M)$.
It follows that $E$ is independent in $\Xnr$ if and only if every generic $M \in \cc^E$ has an algebraic finite unit norm tight frame completion
and that $E$ is spanning in $\Xnr$ if and only if each nonempty fiber $\pi^{-1}(\pi_E(M))$ is finite when $M$ is generic.
Thus in the generic case, Problem \ref{pr:algebraicFuntfCompletion} is equivalent to the following.

\begin{pr}\label{pr:motivatingProblem}
    Find a combinatorial description of the algebraic matroid underlying $\Xnr$
\end{pr}

The first steps towards solving Problem \ref{pr:motivatingProblem} are determining the irreducibility and dimension of $\Xnr$.
Fortunately, this was done in \cite[Theorem 4.3(ii)]{DykemaStrawn}, \cite[Corollary 3.5]{Strawnframes}, {and \cite[Theorem 1.4]{CMSframes} which we summarize in the following theorem.}

\begin{thm}\label{thm:dimOfFuntf}
    The dimension of the affine finite unit norm tight frame variety $\Xnr$ is
    \begin{equation}\label{eq:implicitDefinition}
        \dim(\Xnr) = nr-\binom{n+1}{2} - r + 1\qquad{\rm provided} \ r>n\ge 2.
    \end{equation}
    It is irreducible when $r \ge n+2 > 4$.
\end{thm}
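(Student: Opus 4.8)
The plan is to establish the two assertions of the theorem separately. The dimension formula will follow from an equation count together with one Jacobian rank computation, while the irreducibility is the substantial part; for it I expect to essentially reconstruct the arguments of \cite{DykemaStrawn,Strawnframes,CMSframes}.

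For the dimension, recall that $\Xnr$ is the zero set in $\cc^{nr}$ of the $\binom{n+1}{2}+r$ polynomials $\{f_{ij}:1\le i\le j\le n\}\cup\{g_k:1\le k\le r\}$ from \eqref{eqF} and \eqref{eqG}. By the relation \eqref{eq:fgRelation} one of these polynomials, say $g_r$, is a polynomial combination of the others, so the ideal $(f_{ij},g_k)$ is generated by just $\binom{n+1}{2}+r-1$ of them; Krull's height theorem then shows that every irreducible component of $\Xnr$ has dimension at least $nr-\binom{n+1}{2}-r+1$. For the matching upper bound I would exhibit a single funtf matrix $W^\ast$ --- conveniently a harmonic frame as constructed in \cite{GKKframeerasures} --- and verify that the Jacobian of $\{f_{ij}\}\cup\{g_k\}$ has rank exactly $\binom{n+1}{2}+r-1$ at $W^\ast$ (the relevant tangent space computations are in \cite{DykemaStrawn,Strawnframes}); by the Jacobian criterion the component of $\Xnr$ through $W^\ast$ then has dimension exactly $nr-\binom{n+1}{2}-r+1$, and once irreducibility is known this is the dimension of all of $\Xnr$. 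This is where the hypothesis $r>n$ enters: when $r=n$ every such matrix lies in $O(n,\cc)$, the column-norm polynomials $g_k$ become dependent on the $f_{ij}$, the Jacobian rank drops, and the formula fails (indeed $\dim O(n,\cc)=\binom n2\ne\binom{n-1}2$).

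For irreducibility I would realize $\Xnr$ as a fiber of a map out of a Stiefel variety. The conditions $WW^T=\tfrac rn I_n$ say exactly that $A:=\sqrt{n/r}\,W$ lies in the complex Stiefel variety $\mathrm{St}=\{A\in\cc^{n\times r}:AA^T=I_n\}$. For $r>n$ the connected group $SO(r,\cc)$ acts transitively on $\mathrm{St}$ by right multiplication with stabilizer $SO(r-n,\cc)$, so $\mathrm{St}\cong SO(r,\cc)/SO(r-n,\cc)$ is smooth and irreducible (of dimension $\binom r2-\binom{r-n}2=nr-\binom{n+1}2$, consistent with the formula above). Let $\phi:\mathrm{St}\to\{y\in\cc^r:\sum_k y_k=n\}$ be the squared-column-norm map $A\mapsto\operatorname{diag}(A^TA)$, whose image lands in that hyperplane because $\sum_k(A^TA)_{kk}=\operatorname{tr}(AA^T)=n$. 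Then $\Xnr=\sqrt{r/n}\cdot\phi^{-1}\big(\tfrac nr,\dots,\tfrac nr\big)$, so the whole question reduces to irreducibility of this one fiber of $\phi$.

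The main obstacle is that $(\tfrac nr,\dots,\tfrac nr)$ is not a generic point of the target hyperplane --- it is its $\mathfrak{S}_r$-fixed barycenter --- so one cannot simply invoke the theorem that a dominant morphism of irreducible varieties has irreducible generic fiber. To get around this I would use the eigenstep parametrization of \cite{CFMPSframes,FMPSframes}: building a funtf one column at a time, the sequence of interlacing spectra of the partial frame operators $\sum_{j\le k}w_jw_j^T$ stratifies $\Xnr$, and over the open stratum on which the interlacing is strict the recovery of $W$ from this data is a tower of fiber bundles with irreducible fibers (complex spheres and partial-flag-type varieties), so that stratum is irreducible, and a dimension count identifies its closure with a top-dimensional component of $\Xnr$. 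Combined with the connectedness of the space of real funtfs, this forces the smooth locus of $\Xnr$ to be connected and hence $\Xnr$ to be irreducible for $r\ge n+2>4$ --- which is exactly what is recorded in \cite[Theorem~4.3(ii)]{DykemaStrawn}, \cite[Corollary~3.5]{Strawnframes}, and \cite[Theorem~1.4]{CMSframes}. I expect this stratification/eigenstep analysis to be the technically heaviest step; the Stiefel fibration and the single Jacobian rank computation are comparatively routine.
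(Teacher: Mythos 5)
You should first be aware that the paper does not prove Theorem~\ref{thm:dimOfFuntf} at all: it is explicitly imported from \cite[Theorem 4.3(ii)]{DykemaStrawn}, \cite[Corollary 3.5]{Strawnframes}, and \cite[Theorem 1.4]{CMSframes}, so there is no in-paper argument to compare against and what you have written is a reconstruction of those references. Much of your outline is consistent with how that literature proceeds: discarding one generator via \eqref{eq:fgRelation} and applying Krull's height theorem gives the correct lower bound $nr-\binom{n+1}{2}-r+1$ on every component, the Stiefel computation $\binom{r}{2}-\binom{r-n}{2}=nr-\binom{n+1}{2}$ is right, and you correctly identify that the barycenter fiber of the squared-column-norm map is non-generic, so irreducibility cannot come from generic-fiber theorems and genuinely requires the eigenstep machinery of \cite{CFMPSframes,FMPSframes}.

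Two steps do not go through as written. First, the dimension upper bound: evaluating the Jacobian of the $f_{ij}$ and $g_k$ at a single harmonic frame $W^{\ast}$ only bounds the dimension of the component \emph{through} $W^{\ast}$; Krull gives a lower bound on all components but no upper bound, so a larger component elsewhere is not excluded. The theorem asserts \eqref{eq:implicitDefinition} for all $r>n\ge 2$, including parameters (e.g.\ $n=2$, or $r=n+1$) where irreducibility is not claimed, and your own caveat ``once irreducibility is known'' concedes that those cases are left unproved; one would instead need the rank computation at a generic point of \emph{every} component, which is exactly where the orthodecomposable/singular frames analyzed in \cite{DykemaStrawn,Strawnframes} have to be confronted. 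Second, the concluding inference for irreducibility---that connectedness of the space of real funtfs forces the smooth locus of the complex variety $\Xnr$ to be connected---is invalid. A connected real algebraic set can have reducible Zariski closure (already $\{xy=0\}$ does), because distinct components may meet inside the singular locus, so real connectivity says nothing about connectivity of the smooth locus. In \cite{CMSframes} irreducibility is proved directly, by exhibiting a dense constructible subset of the complex variety as the image of an irreducible variety under the eigenstep parametrization; real connectivity is a separate theorem there, not an input. Since that eigenstep analysis is precisely the step you defer, the irreducibility half of the theorem remains asserted rather than proved.
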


In our work, we look to determine each basis (Definition~\ref{defn:algebraicMatroid}, item \ref{item:basis}) of $\Xnr$.
We restrict our study to $r \ge n+2 > 4$ so that $\Xnr$ is irreducible and thus gives a matroid.
We seek a combinatorial description using bipartite graphs.
Bipartite graphs provide a natural language for attacking Problem~\ref{pr:motivatingProblem}.
Given finite sets $A$ and $B$ and a subset $S \subseteq A \times B$, we let $(A,B,S)$ denote the \emph{bipartite
graph} with partite vertex sets $A$ and $B$ and edge set $S$.
We call two bipartite graphs $(A_1,B_1,S_1)$ and $(A_2,B_2,S_2)$ \emph{bipartite isomorphic} if there exists
a graph isomorphism $\phi: A_1 \cup B_1 \rightarrow A_2 \cup B_2$ such that $\phi(A_1) = A_2$ and $\phi(B_1) = B_2$.

Every subset $E$ of entries of an $n\times r$ matrix can be identified with the bipartite graph $([n],[r],[n]\times [r] \setminus E)$,
which we denote by $G_E$.
The edges of $G_E$ are in bijection with the complement of $E$ and not $E$ itself.
This stands in contrast to what is often done in the algebraic matrix completion literature,
but will make our results much cleaner to state.
Neither row-swapping nor column-swapping affects whether a given subset $E$ of entries of an $n\times r$ matrix
is an independent set (or a basis, or spanning set) of $\Xnr$.
Therefore, whether a given subset $E$ of entries is independent (or a basis or spanning) in $\Xnr$ only depends
on the bipartite isomorphism equivalence class of $G_E$.
The (non-bipartite) graph isomorphism class of $G_E$ may not be sufficient to determine
whether $E$ is independent (or a basis or spanning) in $\Xnr$ because the transpose of a finite unit norm tight frame matrix $\W$ may not be funtf.
So from now on, we will only consider bipartite graphs up to their bipartite isomorphism classes.
We may now phrase Problem \ref{pr:motivatingProblem} more concretely as follows.

\begin{pr}\label{pr:mainProblemWithGraphs}
    For which (bipartite isomorphism classes of) bipartite graphs $G_E$ is $E$ a basis of $\Xnr$?
\end{pr}

We will sometimes find it useful to represent a subset $E \subseteq [n]\times [r]$
as the $\{0,1\}$-matrix whose $ij$ entry is $1$ if $(i,j) \in E$ and $0$ otherwise.
Such a representation will be called a \emph{matrix entry representation}.

\begin{figure}[htbp!]
    \begin{equation*}
        \begin{tikzpicture}
            \vertex (r1) at (0,0)[label=left:$1$]{};
            \vertex (r2) at (0,-1)[label=left:$2$]{};
            \vertex (r3) at (0,-2)[label=left:$3$]{};
            \vertex (c1) at (1,1)[label=right:$1$]{};
            \vertex (c2) at (1,0)[label=right:$2$]{};
            \vertex (c3) at (1,-1)[label=right:$3$]{};
            \vertex (c4) at (1,-2)[label=right:$4$]{};
            \vertex (c5) at (1,-3)[label=right:$5$]{};
            \path
                (r1) edge (c1)
                (r1) edge (c4)
                (r1) edge (c5)
                (r2) edge (c2)
                (r2) edge (c4)
                (r2) edge (c5)
                (r3) edge (c3)
                (r3) edge (c4)
                (r3) edge (c5)
            ;
            \node at (5,-1){
            $\begin{pmatrix}
            0&1&1&0&0\\
            1&0&1&0&0\\
            1&1&0&0&0
        \end{pmatrix}$};
        \end{tikzpicture}
    \end{equation*}
    \caption{Depicting $E := \{(1,2),(1,3),(2,1),(2,3),(3,1),(3,2)\} \subset [3]\times[5]$
    as the bipartite graph $G_E$ and as a $\{0,1\}$-matrix.}
\end{figure}
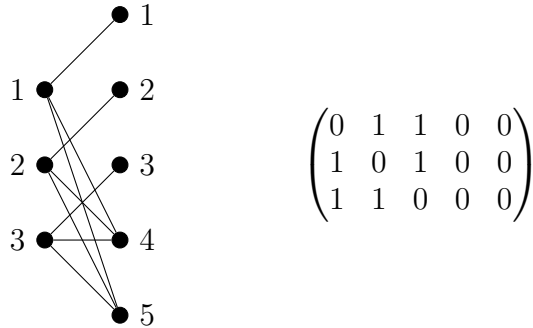


\section{The algebraic matroid underlying the finite unit norm tight frame variety}\label{sec:matroidFuntf}
In this section we give combinatorial criteria on the bases of $\Xnr$.
First we show that if $E$ is a basis of $\Xnr$, then the graph $G_E$ is connected.
Moreover, when $n=3$ the converse is true as well.
Second,
we show that whether or not $E$ is a basis of $\Xnr$ only depends on the  $2$-core of $G_E$.
This allows us to determine a combinatorial criterion for every $\rr$ after fixing~$\nn$.

\subsection{Graph connectivity}
We begin with some graph theoretic definitions.
Let $G = (A,B,S)$ be a bipartite graph.
The \emph{greater $2$-core of $G$}, denoted $\gcore_2(G)$, is the graph obtained from $G$ by iteratively removing all edges that are incident to a vertex of degree one.
The \emph{$2$-core of $G$}, denoted $\core_2(G)$, is the graph obtained by deleting the isolated vertices from $\gcore_2(G)$.
Figure \ref{fig:2coreExample} shows a graph alongside its greater $2$-core and its $2$-core.

\begin{figure}[htb!]
    \begin{tikzpicture}
        \node at (-1,1){$G = $};
        \vertex (1) at (0,0){};
        \vertex (2) at (0,1){};
        \vertex (3) at (0,2){};
        \vertex (a) at (1,0){};
        \vertex (b) at (1,1){};
        \vertex (c) at (1,2){};
        \path
            (3) edge (c) edge (b)
            (2) edge (a) edge (b)
            (1) edge (a) edge (b)
        ;
    \end{tikzpicture}
    \qquad \
    \begin{tikzpicture}
        \node at (-1.5,1){$\gcore_2(G) = $};
        \vertex (1) at (0,0){};
        \vertex (2) at (0,1){};
        \vertex (3) at (0,2){};
        \vertex (a) at (1,0){};
        \vertex (b) at (1,1){};
        \vertex (c) at (1,2){};
        \path
            (1) edge (a) edge (b)
            (2) edge (a) edge (b)
        ;
    \end{tikzpicture}
    \qquad \
    \begin{tikzpicture}
        \node at (-1.5,1){$\core_2(G) = $};
        \vertex (1) at (0,0){};
        \vertex (2) at (0,1){};
        \vertex (a) at (1,0){};
        \vertex (b) at (1,1){};
        \path
            (1) edge (a) edge (b)
            (2) edge (a) edge (b)
        ;
    \end{tikzpicture}
    \caption{A bipartite graph $G$ alongside its greater $2$-core and its $2$-core.}
    \label{fig:2coreExample}
\end{figure}
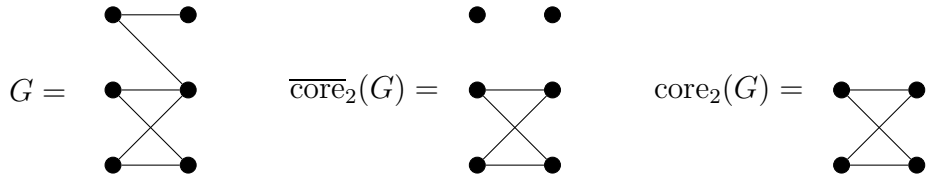

{
Since the graph $G$ need not be connected, there may not exist a spanning tree for $G$. Instead,  consider a \emph{spanning forest} for $G$, which is a maximal acyclic subgraph of $G$, or equivalently a subgraph consisting of a spanning tree in each connected component.   }
{
A \emph{circuit} is a nonempty trail in $G$ such that the first and last vertex coincide, or equivalently, a non-empty sequence $(e_1,\dots,e_k)$ of edges in $G$
for which there is a sequence of vertices $(v_1,\dots,v_k,v_1)$
such that for  $i=1,\dots,k-1$ the vertices of $e_i$ are $(v_i,v_{i+1})$
and the vertices of $e_k$ are $(v_k,v_1)$. 
A cycle is an example of a circuit. 
}
Given a spanning forest $F$ of $G$
and an edge $e$ of $G$ not appearing in $F$,
the graph $F \cup \{e\}$ has exactly one cycle which must contain $e$.
This cycle is called \emph{the fundamental circuit of $e$ with respect to $F$}.

Given a subset $S'\subseteq S$ of the edge set of $G$, the \emph{characteristic vector of $S'$}
is the vector in $\{0,1\}^S$  that has ones at entries corresponding to elements of $S'$
and zeros at all other entries.
The \emph{incidence matrix of $G$} is the matrix whose rows are indexed by the vertices of $G$,
and the row corresponding to a vertex $v$ is the characteristic vector of the set of edges that are incident to $v$.
The columns of the incidence matrix of $G$ are indexed by the edges of $G$.

\begin{example}\label{ex:K}
We use the notation $K_{a,b}$ to denote the complete bipartite graph on partite sets of size $a$ and $b$.
The incidence matrix of $K_{3,5}$ is given by the $8\times 15$ matrix below
\[
\left(\begin{array}{ccccccccccccccc}
1 & 1 & 1 & 1 & 1 & 0 & 0 & 0 & 0 & 0 & 0 & 0 & 0 & 0 & 0\\
0 & 0 & 0 & 0 & 0 & 1 & 1 & 1 & 1 & 1 & 0 & 0 & 0 & 0 & 0\\
0 & 0 & 0 & 0 & 0 & 0 & 0 & 0 & 0 & 0 & 1 & 1 & 1 & 1 & 1\\
\hline
1 & 0 & 0 & 0 & 0 & 1 & 0 & 0 & 0 & 0 & 1 & 0 & 0 & 0 & 0\\
0 & 1 & 0 & 0 & 0 & 0 & 1 & 0 & 0 & 0 & 0 & 1 & 0 & 0 & 0\\
0 & 0 & 1 & 0 & 0 & 0 & 0 & 1 & 0 & 0 & 0 & 0 & 1 & 0 & 0\\
0 & 0 & 0 & 1 & 0 & 0 & 0 & 0 & 1 & 0 & 0 & 0 & 0 & 1 & 0\\
0 & 0 & 0 & 0 & 1 & 0 & 0 & 0 & 0 & 1 & 0 & 0 & 0 & 0 & 1
\end{array}\right).
\]
This matrix is naturally partitioned via the vertices in each partite of the graph $K_{3,5}$.

\end{example}
\begin{thm}\label{thm:mustBeConnected}
    Assume $r \ge n+2 > 4$
    and let $E \subseteq [n]\times [r]$ have cardinality $nr - \binom{n+1}{2} - r + 1$.
    If $E$ is a basis of $\Xnr$,
    then $G_E$ is connected.
    When $n=3$, the converse is true as well.
\end{thm}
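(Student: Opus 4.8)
The plan is to use Proposition~\ref{prop:useJacobianToDetermineIfBasis} in both directions, translating "$E$ is a basis" into a rank statement about the complementary columns of the Jacobian of the defining polynomials $f_{ij}$ and $g_j$. Set $d := nr - \binom{n+1}{2} - r + 1 = \dim(\Xnr)$ and let $N := nr$ be the total number of coordinates; since $|E| = d$, the complement $[n]\times[r]\setminus E$ is the edge set of $G_E$ and has size $\binom{n+1}{2} + r - 1$, which is exactly the number of rows of the Jacobian minus one (there are $\binom{n+1}{2} + r$ polynomials, but the relation \eqref{eq:fgRelation} means their Jacobian has rank at most $\binom{n+1}{2} + r - 1$, consistent with the dimension formula). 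So $E$ is a basis iff the submatrix $J(f,g)_{E^c}$, whose columns correspond to edges of $G_E$, has full possible rank $N - d = \binom{n+1}{2} + r - 1$.

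For the first (forward) implication, valid for all $n$, I would argue the contrapositive: suppose $G_E$ is disconnected, and show $J(f,g)_{E^c}$ is rank-deficient by exhibiting a nonzero linear dependence among its rows (equivalently, a nonzero vector in the left kernel that is supported on the row-labels $\{f_{ij}\} \cup \{g_j\}$). The key observation is that each polynomial $g_j$ involves only coordinates in column $j$, and each $f_{ij}$ involves only coordinates in rows $i$ and $j$ of the matrix; so the ``support structure'' of the Jacobian is governed by how rows and columns of $[n]\times[r]$ interact along the edges of $G_E$. When $G_E$ splits into components $G_1, G_2$, the vertex sets partition the rows $[n]$ and columns $[r]$ into two blocks, and the only equation that couples the two blocks is the single Frobenius-norm relation \eqref{eq:fgRelation}. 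Concretely, the vector assigning $+1$ to every $f_{ii}$ and $g_j$ with index in the first block and $0$ elsewhere should annihilate $J(f,g)_{E^c}$ (using that for an edge $e$ in $G_1$ the partial derivatives of the ``block-2'' diagonal polynomials vanish, and \eqref{eq:fgRelation} forces the block-1 diagonal contributions to cancel on each such edge), giving the needed rank drop. I would make this precise by writing $\partial g_j / \partial x_{k\ell}$ and $\partial f_{ij}/\partial x_{k\ell}$ explicitly and checking the cancellation edge-by-edge.

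For the converse when $n = 3$, I expect the main work — and the main obstacle — to lie here, since a purely support-theoretic argument is not enough: we need that a connected $G_E$ with the right number of edges actually forces the specific, non-generic Jacobian (evaluated at a generic funtf, not a generic matrix) to have full rank. The natural strategy is: (i) reduce to the case $G_E = \gcore_2(G_E) = \core_2(G_E)$ using the $2$-core reduction promised later in the section (so we may assume $G_E$ has minimum degree $\ge 2$), then (ii) for $n=3$ analyze the $9 + r - 1$ columns of the Jacobian directly. One approach is to find, for each connected $G_E$ of the correct size, an explicit funtf $W$ (perhaps built from harmonic frames or from a small ``seed'' frame padded with orthonormal-basis-like columns) at which the relevant minor is nonzero; since full rank is a Zariski-open condition on $\Xnr$ and $\Xnr$ is irreducible (Theorem~\ref{thm:dimOfFuntf}), a single witness suffices. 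Alternatively, and more combinatorially, one could set up an induction on $r$: peeling off a column $j$ of $W$ whose corresponding column-vertex has small degree in $G_E$, relate $\Xnr \cap \{x_{\cdot j} = \text{fixed}\}$ to a lower-dimensional frame-type variety, and use a Laplace/Schur-complement expansion of the Jacobian along that column's rows and the $g_j$ row. The delicate point throughout is that in $\rr^3$ the row-orthogonality block $WW^\TT = \tfrac{r}{3}I_3$ is only $6$-dimensional, so the interaction between the $6$ constraints $f_{ij}$ and the cycle space of $G_E$ is tight; verifying that connectivity exactly matches the rank condition is where I'd spend the most effort, very plausibly via the $2$-core reduction plus a finite check on the possible $2$-cores for each residue of $r$.
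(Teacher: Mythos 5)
Your forward direction is essentially the paper's argument in dual form: the paper rescales the Jacobian so that the rows of the diagonal constraints $f_{11},\dots,f_{nn},g_1,\dots,g_r$ become (twice) the incidence matrix of $K_{n,r}$, and bounds the rank of $J_{[n]\times[r]\setminus E}$ by $r+\binom{n+1}{2}-c$ where $c$ is the number of components of $G_E$; you instead exhibit explicit left-kernel vectors supported on each component. That works, but your stated vector ($+1$ on every $f_{ii}$ and every $g_j$ in the first block) does \emph{not} annihilate the columns: on an edge $(k,\ell)$ of the first component it evaluates to $\tfrac{\partial f_{kk}}{\partial x_{k\ell}}+\tfrac{\partial g_\ell}{\partial x_{k\ell}}=4x_{k\ell}$. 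The relation \eqref{eq:fgRelation} is $\sum_i f_{ii}=\sum_j g_j$, so the correct vector is $+1$ on the block's $f_{ii}$'s and $-1$ on its $g_j$'s; with that sign fix, two or more components give two or more independent left-kernel vectors of the $(\binom{n+1}{2}+r)\times(\binom{n+1}{2}+r-1)$ submatrix, so it cannot have rank $\binom{n+1}{2}+r-1$ and $E$ is not a basis. This is a repairable slip, not a conceptual error.

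The converse for $n=3$ is where the genuine gap lies: you have correctly guessed the strategy (reduce to the $2$-core, then a finite check), but none of the steps that make it a proof are carried out. Concretely, what is missing is: (a) the observation that any column dependence of the Jacobian restricted to the edges of $G_E$ must already be a dependence of the incidence-matrix rows, hence lies in the cycle space of $G_E$, which is $3$-dimensional (since $G_E$ is connected with $r+5$ edges on $r+3$ vertices) and is supported on $\core_2(G_E)$; (b) the enumeration showing $\core_2(G_E)$ has partite sizes $(n',r')$ with $n'+r'+2$ edges and minimum degree $2$, forcing $(n',r')\in\{(2,4),(3,3),(3,4),(3,5)\}$ and exactly seven graphs up to bipartite isomorphism; and (c) the verification, for each of the seven graphs, that the three off-diagonal rows $f_{12},f_{13},f_{23}$ kill no nonzero vector of the cycle space (the paper does this by a symbolic rank computation of a $3\times 3$ matrix over the function field). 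Your alternative of producing one explicit funtf witness per case would also suffice by semicontinuity of rank on the irreducible variety $\Xnr$, but you neither produce the witnesses nor establish that the case list is finite and independent of $r$ (your remark about checking "each residue of $r$" suggests the $r$-independence, which follows from the degree count $2r'\le n'+r'+2$, has not been nailed down). As written, the converse is a plan rather than a proof.
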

\begin{proof}
Let $g_i$ and $f_{ij}$ denote the polynomials as in \eqref{eqG} and $\eqref{eqF}$; this set of polynomials generate the ideal of $\Xnr$.
	Let $J$ be the $(r+\binom{n+1}{2}) \times nr$ Jacobian matrix
    \begin{equation*}
        J:=J(g_1,\dots,g_r,f_{11},f_{12}\dots,f_{nn}).
    \end{equation*}
	Proposition~\ref{prop:useJacobianToDetermineIfBasis} implies that
    $E$ is a basis of $\Xnr$ if and only if
    the $(r+\binom{n+1}{2})\times (r+\binom{n+1}{2}-1)$
    matrix $J_{[n]\times[r]\setminus E}$ has full rank,  
     or equivalently, 
     \[
     \rank(J_{[n]\times[r]\setminus E})=r +\binom{n+1}{2} - 1.
     \]

    Let $J'$ and $J''$ denote the following matrices of size $(r+\binom{n+1}{2})\times nr$ and $(r+n)\times nr$, respectively
	\begin{align*}
	    J'&:=J \cdot \diag({1}/{x_{11}},1/x_{12}\dots,{1}/{x_{nr}}),\\
	    J''&:=J(g_1,\dots,g_r,f_{11},f_{22},\dots,f_{ii},\dots,f_{nn})\cdot \diag({1}/{x_{11}},1/x_{12}\dots,{1}/{x_{nr}}),
	\end{align*}
    where $\diag(w)$ denotes the matrix with the vector $w$ along its diagonal.
{
The matrix $J''$ is twice the incidence matrix of $K_{n,r}$ from Example~\ref{ex:K},
} and $J''_{[n]\times [r]\setminus E}$
    is twice the incidence matrix of $G_E$.
    Thus if $G_E$ has $c$ connected components, then
    \begin{equation*}
    	\rank (J''_{[n]\times [r]\setminus E})= r+n-c.
    \end{equation*}
    On the other hand, since $J'$ can be obtained from $J''$ by including $\binom{n}{2}$ additional rows, we have
    \begin{equation*}
	 \rank(J'_{[n]\times [r]\setminus E}) \le
	 \rank (J''_{[n]\times [r]\setminus E})+\binom{n}{2}
	 =    r + \binom{n+1}{2}-c.
    \end{equation*}
Since $\rank(J_{[n]\times [r]\setminus E}) = \rank(J'_{[n]\times [r]\setminus E})$,
we have that if $G_E$ is disconnected, then $J_{[n]\times [r]\setminus E}$ is rank deficient and thus $E$ is not a basis.

	Now, having proved that $E$ being a basis implies connectivity of $G_E$,
    we assume that $n = 3$ and prove the converse.
    Further assume that $G_E$ is connected with $\binom{n+1}{2}+r-1=r+2$ edges.
    We will show that $E$ is a basis of $\Xnr$ by showing that $J'_{[n]\times[r]\setminus E}$ has full rank.
    	This is done by splitting $J'_{[n]\times[r]\setminus E}$ into two row submatrices whose kernels intersect trivially.

    Twice the incidence matrix of the complete bipartite graph $K_{3,r}$ is a row-submatrix of $J'$.
    Therefore, any linear relation among the columns of $J'$ must lie in the linear space
    \begin{equation*}
    	\cc\{v_C: C \textnormal{ is a circuit of } K_{3,r}\} 
    \end{equation*}
    where $v_C \in \cc^{[3]\times [r]}$ is the $\{1,-1,0\}$-vector obtained from the characteristic vector of $C$
    by
    giving adjacent edges opposite signs.
    The row of $J'$ corresponding to the constraint $f_{ab}=0$ with $a \neq b$
    has $x_{bi}/x_{ai}$ at the column corresponding to $x_{ai}$ and $x_{ai}/{x_{bi}}$ at the column corresponding to $x_{bi}$.
    For ease of notation, we introduce the change of variables
    \begin{equation*}
        t_{1i} := x_{2i}/x_{1i} \qquad t_{2i} := x_{3i}/x_{1i}.
    \end{equation*}
    With this change of variables, the rows of $J'$ corresponding to the constraints $f_{12}=f_{13}=f_{23}=0$
    form the matrix $K$ shown below
    \begin{equation*}
        K:=\bbordermatrix{
            &x_{11} & \dots & x_{1r} & x_{21} & \dots & x_{2r} & x_{31} & \dots & x_{3r} \cr
            f_{12} &t_{11} & \dots & t_{1r} & t^{-1}_{11} & \dots & t^{-1}_{1r} & 0 & \dots & 0 \cr
            f_{13} &t_{21} & \dots & t_{2r} & 0 & \dots & 0 & t^{-1}_{21} & \dots & t^{-1}_{2r} \cr
            f_{23} &0 & \dots & 0 & t_{21}t^{-1}_{11} & \dots & t_{2r}t^{-1}_{1r} & t_{11}t^{-1}_{21} & \dots & t_{1r}t^{-1}_{2r}
        }.
    \end{equation*}
    Fix a spanning tree $T$ of $G_E$ and let $e_1,e_2,e_3$ denote the three edges of $G_E$ that are not contained in $T$.
    Let $C_i$ denote the fundamental circuit of $e_i$ with respect to $T$.
    The space of linear relations among the columns of $J'$ corresponding to the edges of $G_E$
    lies within the three-dimensional subspace $\cc\{v_{C_i}: i = 1,2,3\}$.
    We now show that no nonzero element of $\cc\{v_{C_i}: i = 1,2,3\}$ lies in the kernel of $K$.
    It will then follow that the column-submatrix of $J'$ corresponding to the edges of $G_E$ has maximum rank.

    The three fundamental circuits $C_1,C_2,$ and $C_3$ all lie in $\core_2(G_E)$
    which is a bipartite graph on partite sets of size $n'\le 3$ and $r' \le r$.
    Each vertex of $\core_2(G_E)$ has degree at least $2$, so $n',r'\ge 2$.
    Since $G_E$ is connected, $\core_2(G_E)$ must also be connected.
    Hence since $C_1,C_2,$ and $C_3$ all lie in $\core_2(G_E)$,
    $\core_2(G_E)$ must have exactly $n'+r'+2$ edges.
    Since each vertex has degree at least $2$, $2r' \le n'+r'+2$ and so $r' \le n'+2$.
    So thus far, we only need to consider
    $(n',r') = (2,2),(2,3),(2,4),(3,2),(3,3),(3,4),(3,5)$.
    Among these, the only $(n',r')$-pairs such that there even exists such a bipartite graph with the correct number of edges
    are $(2,4),(3,3),(3,4),(3,5)$.
    For these values of $(n',r')$,
    we may compute all the connected bipartite graphs on partite sets of size $n'$ and $r'$
    with minimum degree $2$ and exactly $n'+r'+2$ edges using the \texttt{genbg} command of \texttt{Nauty and Traces} \cite{McKay201494}.
    There are seven such graphs and they are displayed in Figure \ref{fig:sevenPossible}
    with vertices labeled according to which row or column they correspond to.
    \color{black}
    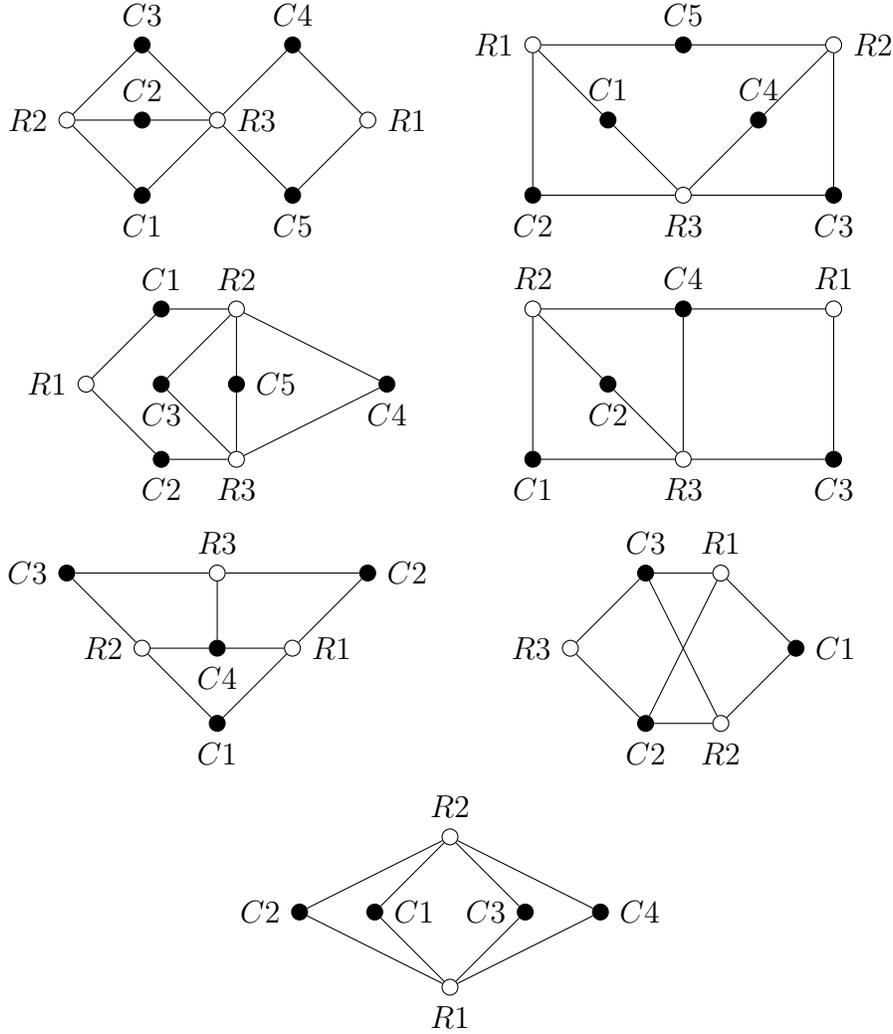
\begin{figure}
    \centering
    \begin{tabular}{cc}
        \begin{tikzpicture}
            \wvertex (R2) at (0,0)[label=left:$R2$]{};
            \vertex (C2) at (1,0)[label=above:$C2$]{};
            \vertex (C3) at (1,1)[label=above:$C3$]{};
            \vertex (C1) at (1,-1)[label=below:$C1$]{};
            \wvertex (R3) at (2,0)[label=right:$R3$]{};
            \wvertex (R1) at (4,0)[label=right:$R1$]{};
            \vertex (C4) at (3,1)[label=above:$C4$]{};
            \vertex (C5) at (3,-1)[label=below:$C5$]{};
            \path
            	(R2) edge (C1) edge (C2) edge (C3)
            	(R3) edge (C1) edge (C2) edge (C3) edge (C4) edge (C5)
            	(R1) edge (C4) edge (C5)
            ;
        \end{tikzpicture}&
        \begin{tikzpicture}
            \vertex (C2) at (0,0)[label=below:$C2$]{};
            \wvertex (R3) at (2,0)[label=below:$R3$]{};
            \vertex (C3) at (4,0)[label=below:$C3$]{};
            \wvertex (R1) at (0,2)[label=left:$R1$]{};
            \wvertex (R2) at (4,2)[label=right:$R2$]{};
            \vertex (C1) at (1,1)[label=above:$C1$]{};
            \vertex (C4) at (3,1)[label=above:$C4$]{};
            \vertex (C5) at (2,2)[label=above:$C5$]{};
            \path
            	(R3) edge (C1) edge (C2) edge (C3) edge (C4)
            	(R1) edge (C1) edge (C2) edge (C5)
            	(R2) edge (C3) edge (C4) edge (C5)
            ;
        \end{tikzpicture}\\
        \begin{tikzpicture}
            \wvertex (R1) at (0,0)[label=left:$R1$]{};
            \vertex (C1) at (1,1)[label=above:$C1$]{};
            \vertex (C2) at (1,-1)[label=below:$C2$]{};
            \vertex (C5) at (2,0)[label=right:$C5$]{};
            \vertex (C3) at (1,0)[label=below:$C3$]{};
            \vertex (C4) at (4,0)[label=below:$C4$]{};
            \wvertex (R2) at (2,1)[label=above:$R2$]{};
            \wvertex (R3) at (2,-1)[label=below:$R3$]{};
            \path
            	(R1) edge (C1) edge (C2)
            	(C3) edge (R2) edge (R3)
            	(C4) edge (R2) edge (R3)
            	(C5) edge (R2) edge (R3)
            	(C1) edge (R2)
            	(C2) edge (R3)
            ;
        \end{tikzpicture}&
        \begin{tikzpicture}
            \vertex (C1) at (0,0)[label=below:$C1$]{};
            \wvertex (R3) at (2,0)[label=below:$R3$]{};
            \vertex (C4) at (2,2)[label=above:$C4$]{};
            \wvertex (R2) at (0,2)[label=above:$R2$]{};
            \wvertex (R1) at (4,2)[label=above:$R1$]{};
            \vertex (C3) at (4,0)[label=below:$C3$]{};
            \vertex (C2) at (1,1)[label=below:$C2$]{};
            \path
            	(C1) edge (R2) edge (R3)
            	(C2) edge (R2) edge (R3)
            	(C4) edge (R1) edge (R2) edge (R3)
            	(C3) edge (R1) edge (R3)
            ;
        \end{tikzpicture}\\
        \begin{tikzpicture}
        	\vertex (C1) at (0,0)[label=below:$C1$]{};
        	\vertex (C2) at (2,2)[label=right:$C2$]{};
        	\vertex (C3) at (-2,2)[label=left:$C3$]{};
        	\wvertex (R2) at (-1,1)[label=left:$R2$]{};
        	\wvertex (R1) at (1,1)[label=right:$R1$]{};
        	\wvertex (R3) at (0,2)[label=above:$R3$]{};
        	\vertex (C4) at (0,1)[label=below:$C4$]{};
            \path
            	(R1) edge (C1) edge (C2) edge (C4)
            	(R2) edge (C1) edge (C3) edge (C4)
            	(R3) edge (C2) edge (C3) edge (C4)
            ;
        \end{tikzpicture}&
        \begin{tikzpicture}
        	\wvertex (R3) at (0,0)[label=left:$R3$]{};
        	\vertex (C2) at (1,-1)[label=below:$C2$]{};
        	\vertex (C3) at (1,1)[label=above:$C3$]{};
        	\wvertex (R1) at (2,1)[label=above:$R1$]{};
        	\wvertex (R2) at (2,-1)[label=below:$R2$]{};
        	\vertex (C1) at (3,0)[label=right:$C1$]{};
            \path
                (R3) edge (C2) edge (C3)
                (R1) edge (C1) edge (C2) edge (C3)
                (R2) edge (C1) edge (C2) edge (C3)
            ;
        \end{tikzpicture}\\
        \multicolumn{2}{c}{
        \begin{tikzpicture}
            \wvertex (R1) at (0,-1)[label=below:$R1$]{};
            \wvertex (R2) at (0,1)[label=above:$R2$]{};
            \vertex (C1) at (-1,0)[label=right:$C1$]{};
            \vertex (C2) at (-2,0)[label=left:$C2$]{};
            \vertex (C3) at (1,0)[label=left:$C3$]{};
            \vertex (C4) at (2,0)[label=right:$C4$]{};
            \path
            	(R1) edge (C1) edge (C2) edge (C3) edge (C4)
            	(R2) edge (C1) edge (C2) edge (C3) edge (C4)
            ;
        \end{tikzpicture}
        }
    \end{tabular}
    \caption{
        The seven possibilities (up to bipartite isomorphism) for $\core_2(G_E)$ when $E$ is a basis of $X_{3,r}$.}\label{fig:sevenPossible}
    \end{figure}
    \color{black}

    By relabeling vertices, we may assume that $\core_2(G_E)$ is supported on partite vertex sets $1,\dots,n'$ and $1,\dots,r'$.
    Let $A_{\core_2(G_E)}$ denote the incidence matrix of $\core_2(G_E)$
    and let $M_{\core_2(G_E)}$ denote the matrix whose columns are a basis of the kernel of $A_{\core_2(G_E)}$.
    Then $\cc\{v_{C_i}: i = 1,2,3\}$ is the span of $M_{\core_2(G_E)}$.
    Letting $K'$ be the submatrix of $K$ with columns corresponding to the edge set of $\core_2(G_E)$,
    we are done if we show that $K'M_{\core_2(G_E)}$ has rank 3 for the seven values of $\core_2(G_E)$ above.
    This is verified in a Mathematica script available at the following url\cite{Mathematica}.
\begin{quote}
\url{https://dibernstein.github.io/Supplementary_materials/funtf.html}
\end{quote}
{This Mathematica script computes each of the symbolic matrices $K'M_{\core_2(G_E)}$,
then calculates their rank as matrices with entries in the appropriate function field.}
\end{proof}

The following proposition is useful to construct examples showing that the converse of Theorem \ref{thm:mustBeConnected} is not true for $n \ge 4$.

\begin{prop}\label{prop:atMostTwoUnknownColumns}
	Assume $r \ge n+2 > 4$ and
	let $E \subseteq [n]\times [r]$. 
    If $E$ is spanning in $\Xnr$,
    then at most two vertices of $G_E$ corresponding to columns can have degree $n$.
\end{prop}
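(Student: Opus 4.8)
The plan is to argue by contradiction at the level of the Jacobian, in the same spirit as the proof of Theorem~\ref{thm:mustBeConnected}. Suppose $E$ is spanning in $\Xnr$ but three distinct columns $j_1, j_2, j_3$ of $G_E$ all have degree $n$; that is, the entire $j_k$-th column of the matrix is unknown for $k=1,2,3$. By Proposition~\ref{prop:useJacobianToDetermineIfBasis} (or its spanning-set analogue: $E$ spanning means that a size-$\dim(\Xnr)$ subset of $E$ is a basis, hence the complementary columns of $J$ have full rank $r+\binom{n+1}{2}-1$), it suffices to exhibit a nonzero linear relation among the columns of $J'_{[n]\times[r]\setminus E}$, where $J'$ is the scaled Jacobian from the proof of Theorem~\ref{thm:mustBeConnected}. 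Equivalently, I will produce a nonzero vector in the kernel of the row-submatrix of $J'$ supported on the columns indexed by $j_1, j_2, j_3$ — a relation using only the $3n$ entries lying in these three fully-unknown columns.

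First I would recall the structure of $J'$ established in that earlier proof: twice the incidence matrix of $K_{n,r}$ is a row-submatrix of $J'$ (coming from the $g_j$ and $f_{ii}$ rows), so any column relation among the edges of $G_E$ must be supported on circuits of $K_{n,r}$, and the additional rows come from the off-diagonal constraints $f_{ab}=0$ with $a\neq b$. The key combinatorial observation is that the three fully-unknown columns $j_1,j_2,j_3$ together with the $n$ row-vertices span a complete bipartite subgraph $K_{n,3}$ inside $G_E$. Inside $K_{n,3}$ there are many circuits — in fact the cycle space of $K_{n,3}$ has dimension $3n - (n+3) + 1 = 2n-2 \ge 6$ (using $n\ge 3$). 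I would then show directly that, restricted to the columns of $J'$ indexed by the $3n$ edges of this $K_{n,3}$, the off-diagonal constraint rows cannot cut the $(2n-2)$-dimensional cycle space down to zero: the $f_{ab}$ rows with $a\ne b$, restricted to these columns, form a matrix with only $\binom{n}{2}$ rows, and $\binom{n}{2} < 2n-2$ fails for small $n$ but one must be more careful — actually the relevant count is that the relations we seek must simultaneously lie in the cycle space of the $K_{n,3}$ subgraph AND in the kernel of the $f_{ab}$, $a\ne b$, rows restricted to these $3n$ columns, and a dimension count shows this intersection is nonzero. The cleanest version: the span of $\{v_C : C \text{ a circuit of } K_{n,3}\}$ has dimension $2n-2$, while the off-diagonal rows of $J'$ impose at most $\binom{n}{2}$ constraints; but in fact each $f_{ab}$ row with $\{a,b\}$ fixed, restricted to the three unknown columns, only involves edges at those columns, and a short argument (paralleling the $n=3$ case) shows at most $2n-3$ of these constraints can be independent on this subspace, leaving a nonzero kernel vector, hence a nonzero column relation, contradicting full rank.

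The main obstacle I expect is making the last dimension count fully rigorous rather than heuristic: one must verify that the off-diagonal constraint rows, once restricted to the $3n$ columns of the $K_{n,3}$ subgraph and expressed in the $t$-coordinates of the earlier proof, genuinely leave a nonzero intersection with the cycle space for \emph{every} $n\ge 3$ — not merely for small cases checkable by computer. A robust way to handle this is to exhibit one explicit circuit relation by hand: take two of the unknown columns, say $j_1$ and $j_2$, and two rows $a,b$; the 4-cycle $a\,j_1\,b\,j_2$ gives a vector $v_C$, and I would check that a suitable linear combination of such 4-cycles (ranging over pairs of rows) is annihilated by all the $f_{ab}$ rows simultaneously, because each $f_{ab}$ row "sees" the cycle through a single generalized-cross-ratio expression in the $t$-variables that can be forced to cancel. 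Alternatively — and this is probably the shortest route — one can reduce to the $n=3$ computation already done: restricting attention to any three rows together with the three unknown columns yields a $K_{3,3}$ subgraph, the analysis of the $n=3$ proof shows the relevant restricted matrix does \emph{not} have full rank on a $K_{3,3}$ with all nine edges present (indeed, $K_{3,3}$ has $9$ edges but $3+3+2 < 9$, and $K_{3,3}$ is not among the seven admissible $\core_2$ graphs in Figure~\ref{fig:sevenPossible} precisely because it has too many edges), so the corank is positive; lifting the resulting relation back to $\Xnr$ by padding with zeros on all other coordinates gives the desired rank deficiency. I would therefore structure the proof to invoke the already-verified $n=3$ obstruction as a black box, applied to an arbitrary choice of three rows, which keeps the argument uniform in $n$ and avoids any new symbolic computation.
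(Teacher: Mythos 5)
Your reduction of the statement to a rank question is fine: if three columns $j_1,j_2,j_3$ are fully unknown, then the complement of $E$ contains all $3n$ entries of those columns, and a nonzero linear relation among the corresponding columns of the Jacobian would indeed show that $E$ is not spanning. The problem is that neither of your two routes actually produces that relation. In the first route, the cycle space of the $K_{n,3}$ spanned by the three unknown columns has dimension $2n-2$, while there are $\binom{n}{2}$ off-diagonal rows $f_{ab}$; since $\binom{n}{2}\ge 2n-2$ for every $n\ge 4$ (which is exactly the range where the proposition has content), the naive dimension count gives nothing, and your fallback claim that ``at most $2n-3$ of these constraints can be independent on this subspace'' is precisely the statement to be proved --- it is asserted, not established. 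In the second route, padding a kernel vector of the restricted $K_{3,3}$ problem with zeros does not give a kernel vector of the full Jacobian: each row $f_{ab}$ with exactly one of $a,b$ in your chosen triple of rows has entries $x_{bj_k}/x_{aj_k}$ at positions $(a,j_k)$ in the support of the padded vector, and these involve fresh generic coordinates $x_{bj_k}$ from rows outside the triple, so they will generically not annihilate a fixed nonzero vector. There are $3(n-3)$ such extra constraints and your argument ignores all of them.

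The ingredient you are missing is the one the paper uses, and it sidesteps the Jacobian entirely. Working with the fiber over the known entries, the $\binom{n+1}{2}$ polynomials $\tilde f_{ij}$, viewed as functions of the $nk$ unknown entries (with $k$ the number of fully unknown columns), are --- up to additive constants --- the entries of $W'W'^{\,T}$ where $W'$ is the $n\times k$ block of unknowns, hence they factor through the variety of $n\times n$ symmetric matrices of rank at most $k$, which has dimension $nk-\binom{k}{2}$. So those $\binom{n+1}{2}$ equations can cut the codimension of the fiber by at most $nk-\binom{k}{2}$, and after dropping one redundant $\tilde g_a$ via the trace identity \eqref{eq:fgRelation}, the column-norm equations contribute at most $k-1$ more. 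The total codimension is at most $nk-\binom{k}{2}+k-1<nk$ once $k\ge 3$, so the fiber is positive-dimensional and $E$ is not spanning. This count is uniform in $n$ and replaces exactly the step where both of your arguments stall; without it (or some equivalent input such as the low rank of $W'W'^{\,T}$), I do not see how to close either version of your proof.
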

\begin{proof}
    Assume $G_E$ has $k$ column vertices of degree $n$.
    Without loss of generality, assume they correspond to the first $k$ columns
    so that $(a,i) \notin E$ for all $1 \le a \le n$, $1 \le i \le k$.
    Define $E' := \{(a,i): 1 \le a \le n, k+1 \le i \le r\}$, and observe
    $E \subseteq E'$.
    We show that {when $k \ge 3$,} the dimension of $\pi_{E'}^{-1}(\pi_{E'}(M))$ is positive for generic $M$.
    It follows that $E'$, and therefore $E$, is not spanning.

    Let $M \in \Xnr$ be a generic finite unit norm tight frame.
    The $(i,a)$ entry of $M$ will be denoted $m_{ia}$.
    Let $\tilde g_a$, $\tilde f_{ij}$ denote the polynomials obtained from $g_a$ 
    and $f_{ij}$,
    as in \eqref{eqG} and $\eqref{eqF}$,
    by plugging in $m_{ia}$ for $x_{ia}$ when $(i,a) \in E'$.
    The Zariski closure of $\pi_{E'}^{-1}(\pi_{E'}(M))$ can be identified with the variety in $\cc^{n\times k}$
    defined by the polynomials $\tilde g_1,\dots,\tilde g_k$ and $\tilde f_{ij}$ for $1 \le i \le j \le n$.
    
    {Following from Equation~\ref{eq:fgRelation}, we have 
    \begin{align}\label{eq:dropG}
    	\tilde f_{11} + \tilde f_{22} + \dots + \tilde f_{nn} = \tilde g_1 + \dots + \tilde g_k 
    \end{align}
    }
    and so $\pi_{E'}^{-1}(\pi_{E'}(M))$ is in fact the vanishing locus of $\tilde g_1,\dots,\tilde g_{k-1}$ and $\tilde f_{ij}$ for $1 \le i,j \le n$.
    Moreover, the polynomials
    \begin{align}\label{eq:symmetricMatrixLowRank}
    	\tilde f_{ij} + \delta_{ij}\frac{r}{n} - \sum_{a = k+1}^r m_{ia}m_{ja} \qquad {\rm where} \qquad \delta_{ij} :=
    	\begin{cases}
    	    1 & i = j\\
    	    0 & i \neq j
    	\end{cases}
    \end{align}
    parameterize the variety of $n\times n$ symmetric matrices of rank at most $k$,
    which has dimension $nk-\binom{k}{2}$ (see for example \cite[Lemma~6.2]{bernstein2020typical}).
    Thus the $\binom{n+1}{2}$ polynomials $\tilde f_{ij}, 1 \le i\le j \le n$
    together contribute at most $nk-\binom{k}{2}$ to the codimension of $\pi_{E'}^{-1}(\pi_{E'}(M))$.
    Hence the codimension of $\pi_{E'}^{-1}(\pi_{E'}(M))$ is at most $nk-\binom{k}{2} + k-1$,
    which is strictly less than $nk$ for $k \ge 3$.
\end{proof}

\begin{example}\label{ex:converseConnectivityFalse}
    Let $n \ge 4$ and $r \ge n+2$ and let $E' := \{(i,a): 2 \le i \le n, 4 \le a \le r\}$.
    Let $E$ be obtained from $E'$ by removing any $\binom{n-2}{2}-1$ elements.
    Then $E$ has $nr - \binom{n+1}{2} - r + 1$ elements and $G_E$ is connected.
    However, Proposition \ref{prop:atMostTwoUnknownColumns} implies that $E$ cannot be a basis of $\Xnr$.
    Figure \ref{fig:connectedNonBasis} shows examples of this construction for $n = 4$ and $n = 5$.

    \begin{figure}[htb!]
    \begin{equation*}
        \begin{pmatrix}
            0&0&0&0&0&0\\
            0&0&0&1&1&1\\
            0&0&0&1&1&1\\
            0&0&0&1&1&1
        \end{pmatrix}
        \qquad
        \begin{pmatrix}
            0&0&0&0&0&0&0\\
            0&0&0&1&1&1&1\\
            0&0&0&1&1&0&1\\
            0&0&0&1&1&1&0\\
            0&0&0&1&1&1&1
        \end{pmatrix}
    \end{equation*}
    \caption{Matrix entry representations of examples of the construction given in Example \ref{ex:converseConnectivityFalse}
    for $n = 4$ and $n=5$.
    These show that when $n \ge 4$,
    a graph on $nr-\binom{n+1}{2}-r+1$ edges whose bipartite complement is connected
    may fail to be a basis of $\Xnr$.
    }\label{fig:connectedNonBasis}
    \end{figure}
\end{example}

\subsection{Combinatorial criteria with fixed row size}
The goal of this section is to fix $\nn$ and find a combinatorial criteria to determine if $E$ is a basis of $\Xnr$ for any $\rr$.
This is made precise in Remark~\ref{remark:punchline}.

The following theorem tells us that whether or not a given $E \subseteq [n]\times [r]$ of cardinality $nr-\binom{n+1}{2}-r+1$
is a basis in $\Xnr$ depends only on $\core_2(G_E)$.

\begin{thm}\label{theorem:onlyTwoCoreMatters}
	Assume $r \ge n+2 > 4$ and let $E \subseteq [n]\times [r]$ such that $|E| = nr-\binom{n+1}{2} - r + 1$.
    Then $E$ is a basis in the algebraic matroid underlying $\Xnr$ if and only if
    the set $E' \supseteq E$ satisfying $G_{E'} = \gcore_2(G_E)$ is spanning in $\Xnr$.
    Moreover, for a fixed $n$, there are only finitely many possible graphs appearing as $\core_2(G_E)$
    as $E$ ranges over all bases of $\Xnr$.
\end{thm}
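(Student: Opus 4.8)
The plan is to argue entirely inside the algebraic matroid underlying $\Xnr$, using one structural observation about leaf edges. Observe first that if a vertex $v$ of $G_E$ has degree one, with unique incident edge $e=(i,j)$, then adjoining the coordinate $(i,j)$ to $E$ does not raise the dimension of the corresponding coordinate projection, i.e. $(i,j)\in\cl(E)$. Indeed, if $v$ is the column vertex $j$, then $\{(a,j): a\in[n],\ a\neq i\}\subseteq E$, and the defining equation $g_j=0$ of $\Xnr$ is the identity $x_{ij}^2 = 1-\sum_{a\neq i}x_{aj}^2$ on $\Xnr$; hence the coordinate function $x_{ij}$ is algebraic over the subfield of the function field of $\Xnr$ generated by $\{x_{aj}: a\neq i\}$, so $\dim\pi_{E\cup\{(i,j)\}}(\Xnr)=\dim\pi_E(\Xnr)$. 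If instead $v$ is the row vertex $i$, the same reasoning applies with the equation $f_{ii}=0$ in place of $g_j=0$; note that the relation \eqref{eq:fgRelation} among the defining polynomials is immaterial here, since the matroid depends only on $\Xnr$ and not on a choice of generators.

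Now recall that $\gcore_2(G_E)$ is produced from $G_E$ by iteratively deleting an edge incident to a degree-one vertex. List the deleted edges, in the order removed, as $e_1,\dots,e_d$, and set $E_0:=E$ and $E_t:=E_{t-1}\cup\{e_t\}$, so that $G_{E_t}$ is $G_E$ with $e_1,\dots,e_t$ removed and $E_d=E'$. Each $e_t$ is a leaf edge of $G_{E_{t-1}}$, so the observation above (applied to $E_{t-1}$) gives $\dim\pi_{E_t}(\Xnr)=\dim\pi_{E_{t-1}}(\Xnr)$; iterating yields $\dim\pi_{E'}(\Xnr)=\dim\pi_E(\Xnr)$. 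Since $|E|=nr-\binom{n+1}{2}-r+1=\dim\Xnr$ by Theorem~\ref{thm:dimOfFuntf}, and since a spanning subset of a matroid whose cardinality equals the rank is a basis, we obtain the chain of equivalences: $E$ is a basis of $\Xnr$ $\iff$ $E$ is spanning $\iff$ $\dim\pi_E(\Xnr)=\dim\Xnr$ $\iff$ $\dim\pi_{E'}(\Xnr)=\dim\Xnr$ $\iff$ $E'$ is spanning in $\Xnr$. This proves the first assertion.

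For the finiteness statement, let $E$ be a basis. By Theorem~\ref{thm:mustBeConnected} the graph $G_E$ is connected, and as it has $\binom{n+1}{2}+r-1\ge 1$ edges it has no isolated vertices, hence exactly $n+r$ vertices and first Betti number $\big(\binom{n+1}{2}+r-1\big)-(n+r)+1=\binom{n}{2}$. Deleting leaf edges and then isolated vertices leaves the first Betti number unchanged, so $\core_2(G_E)$ is a connected bipartite graph of first Betti number $\binom{n}{2}$ (in particular nonempty, since $\binom{n}{2}>0$) in which every vertex has degree at least $2$. Writing $n'$ and $r'$ for the sizes of its two parts, connectivity gives $|E(\core_2(G_E))| = n'+r'-1+\binom{n}{2}$, while summing degrees over the $r'$ column vertices gives $|E(\core_2(G_E))|\ge 2r'$; combining these, $r'\le n'-1+\binom{n}{2}\le n-1+\binom{n}{2}=\binom{n+1}{2}-1$, and trivially $n'\le n$. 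Since there are only finitely many bipartite graphs whose parts have sizes at most $n$ and at most $\binom{n+1}{2}-1$, only finitely many graphs can arise as $\core_2(G_E)$.

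The substantive input is the leaf-edge observation of the first paragraph, which is where the specific equations of $\Xnr$ enter; the remainder is matroid and graph bookkeeping. The points requiring care are that $\gcore_2$ be well defined independently of the order of edge removals (this is part of the stated definition of $\gcore_2$), and that a basis $E$ forces $G_E$ to span all $n+r$ vertices so that the Betti-number computation is valid. One could alternatively bypass the closure language and argue via Proposition~\ref{prop:useJacobianToDetermineIfBasis}: in the Jacobian $J$, the row $g_j$ (respectively $f_{ii}$) has a unique nonzero entry among the columns indexed by the edges of $G_E$ whenever the column vertex $j$ (respectively row vertex $i$) has degree one, so peeling such a vertex removes that row together with the leaf column and drops the rank by exactly one; this gives $\rank J_{[n]\times[r]\setminus E}=\rank J_{[n]\times[r]\setminus E'}+\big(|E'|-|E|\big)$ and hence the same equivalence.
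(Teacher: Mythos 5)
Your proof is correct, and its engine is the same as the paper's: peel degree-one vertices of $G_E$ one at a time, using the quadratic $g_j=0$ (for a leaf column vertex) or $f_{ii}=0$ (for a leaf row vertex) to see that the corresponding unknown entry is determined up to finite ambiguity by the entries already known, thereby reducing $E$ to $E'$ with $G_{E'}=\gcore_2(G_E)$. The differences are in the packaging, and they are improvements. First, you phrase the peeling step as a closure statement in the algebraic matroid ($\dim\pi_{E_t}(\Xnr)=\dim\pi_{E_{t-1}}(\Xnr)$) and then close the loop with the matroid fact that a spanning set of cardinality equal to the rank is a basis; the paper instead argues about generic fibers and its write-up leans on connectivity of $G_E$ to order the peeled edges, whereas your argument needs no connectivity hypothesis for the equivalence (the order of removal is supplied by the definition of $\gcore_2$ itself), so the "if and only if" holds uniformly. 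Second, for the "moreover" clause the paper invokes Proposition~\ref{prop:technicalStuff}, whose proof uses dimension counts for low-rank symmetric matrix varieties; you instead derive the bound $r'\le n'-1+\binom{n}{2}$ directly from the invariance of the first Betti number under leaf-peeling plus a degree-sum count, which is elementary and self-contained (it recovers exactly the inequality $\beta\le\binom{n}{2}+\alpha-1$ from the paper, and finiteness needs nothing more). One cosmetic point: the absence of isolated vertices in $G_E$ follows from connectivity (Theorem~\ref{thm:mustBeConnected}) together with $n+r\ge 2$, not from the edge count alone as your phrasing suggests; the conclusion is unaffected.
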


\begin{proof}
	Let $M \in \Xnr$ be a generic finite unit norm tight frame whose $(i,a)$ entry is $m_{ia}$.
	Let $\tilde g_a$ and $\tilde f_{ij}$ denote the polynomials obtained from $g_a$ and $f_{ij}$
	by setting $x_{ia} = m_{ia}$ when $(i,a) \in E$.
	Then $\pi_E^{-1}(\pi_E(M))$ can be viewed as the zero-dimensional variety in $\cc^{[n]\times[r]\setminus E}$ defined
	by the vanishing of the polynomials $\tilde g_a$, $1 \le a \le r$ and $\tilde f_{ij}$, $1 \le i \le j \le n$.
	Since $G_E$ is connected, the edges of $G_E$ that are not in $\gcore_2(G_E)$ can be ordered $(i_1,a_1),\dots,(i_k,a_k)$
	such that for each $j$,
	in either $\tilde g_{a_j}$ or $\tilde f_{i_j i_j}$,
	every variable other than $x_{i_j a_j}$ that appears is of the form $x_{i_l a_l}$ for some $l < j$.
	It follows that given $\pi_E(M)$,
	one can solve a series of quadratic equations in order to recover, up to finite ambiguity,
	the entries of $M$ at positions corresponding to edges of $G_E$ that are not in $\gcore_2(G_E)$.
	One can then solve for the remaining entries of $M$ precisely when $\gcore_2(G_E)$ is spanning in $\Xnr$.
    The ``moreover'' clause follows by Proposition~\ref{prop:technicalStuff} below.
\end{proof}

\begin{remark}\label{remark:punchline}
Given a set $E \subseteq [n]\times [r]$ that is spanning in $\Xnr$,
the set $E \cup \{(1,r+1),\dots,(n,r+1)\}$ is spanning in $X_{n,r+1}$.
Thus Theorem \ref{theorem:onlyTwoCoreMatters} tells us that if we fix $n$ but allow $r$ to vary,
then the problem of determining whether or not $E \subset [n]\times [r]$ is a basis of $\Xnr$
is equivalent to determining whether or not $\core_2(G_E)$ appears on a certain finite list.
Proposition~\ref{prop:technicalStuff} below gives us the finiteness statement in Theorem \ref{theorem:onlyTwoCoreMatters},
as well as bounds on the size of $\core_2(G)$.
\end{remark}

\begin{prop}\label{prop:technicalStuff}
	Let $r \ge n+2 > 4$ and let $E \subseteq [n]\times [r]$ such that $G_E$ is connected.
    Let $\alpha$ and $\beta$ be the number of row- and column-vertices (respectively) in $\core_2(G_E)$.
    If $E$ is a basis of $\Xnr$, then
    \begin{enumerate}
        \item $\alpha = n-1$ or $\alpha = n$
        \item $\alpha \le \beta \le \binom{n}{2} + \alpha - 1$.
    \end{enumerate}
\end{prop}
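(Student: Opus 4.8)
The plan is to combine Theorem~\ref{theorem:onlyTwoCoreMatters}, which reduces ``$E$ is a basis of $\Xnr$'' to ``the set $E'\supseteq E$ with $G_{E'}=\gcore_2(G_E)$ is spanning in $\Xnr$,'' with a dimension count for the generic fibre of $\pi_{E'}$, carried out in the spirit of Proposition~\ref{prop:atMostTwoUnknownColumns}. First I would collect the purely combinatorial input. Since $E$ is a basis, $|E|=nr-\binom{n+1}{2}-r+1$ by Proposition~\ref{prop:basesSameSize}, so $G_E$ has $\binom{n+1}{2}+r-1$ edges and, being connected by Theorem~\ref{thm:mustBeConnected}, has cyclomatic number $\binom n2$; deleting pendant edges and isolated vertices changes neither connectedness nor the cyclomatic number, so $\core_2(G_E)$ is connected, has minimum degree at least $2$, and hence has exactly $m:=\alpha+\beta+\binom n2-1$ edges (in particular $\alpha,\beta\ge 2$). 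Summing degrees over its $\beta$ column-vertices gives $m\ge 2\beta$, i.e.\ $\beta\le\binom n2+\alpha-1$, which is the upper bound in (2); the upper bound $\alpha\le n$ in (1) is trivial.

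For the two remaining inequalities, relabel so that the row- and column-vertices of $\core_2(G_E)$ are $1,\dots,\alpha$ and $1,\dots,\beta$, and for a generic $M\in\Xnr$ write $M=[\,M_1\mid M_2\,]$ with $M_1$ the first $\beta$ columns. I would show that, after identifying $\cc^m$ with the affine subspace of $\cc^{n\times\beta}$ whose coordinates off the edge set of $\core_2(G_E)$ are fixed to those of $M_1$, the fibre $\pi_{E'}^{-1}(\pi_{E'}(M))$ is isomorphic to the variety of those $n\times\beta$ matrices $W_1$ that agree with $M_1$ off $\core_2(G_E)$, satisfy $W_1W_1^{T}=S$ with $S:=\tfrac rn\,\identity_n-M_2M_2^{T}$, and have all columns of norm $1$. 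Since $E'$ is spanning if and only if this variety is $0$-dimensional for generic $M$, it suffices to bound its codimension in $\cc^m$.

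Two complementary bounds do this. The column-norm equations $\|\mathrm{col}_j(W_1)\|^2-1=0$ ($1\le j\le\beta$) satisfy one linear relation modulo $W_1W_1^{T}=S$, namely their sum equals $\sum_{i\le\alpha}[\,W_1W_1^{T}-S\,]_{ii}$ identically---a restatement of the Frobenius relation~\eqref{eq:fgRelation}---so they raise the codimension by at most $\beta-1$. For the bound giving $\alpha\ge n-1$: since rows $\alpha+1,\dots,n$ of $W_1$ are among the fixed coordinates, the entries $[\,W_1W_1^{T}-S\,]_{ij}$ with $i,j>\alpha$ vanish identically, so the fibre is cut out in $\cc^m$ by only the $\binom{n+1}{2}-\binom{n-\alpha+1}{2}$ equations $[\,W_1W_1^{T}-S\,]_{ij}=0$ with $i\le\alpha$, together with the $\beta$ column-norm equations; hence its codimension is at most $\binom{n+1}{2}-\binom{n-\alpha+1}{2}+\beta-1$, and $0$-dimensionality forces $m\le\binom{n+1}{2}-\binom{n-\alpha+1}{2}+\beta-1$, i.e.\ $\binom{n-\alpha+1}{2}\le n-\alpha$, which holds only when $\alpha\in\{n-1,n\}$. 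For the bound giving $\alpha\le\beta$: the constraint $W_1W_1^{T}=S$ places $W_1$ in a fibre of $\psi\colon\cc^{n\times\beta}\to\cc^{n\times n}$, $W_1\mapsto W_1W_1^{T}$; since $\overline{\operatorname{im}\psi}$ is the irreducible variety of symmetric matrices of rank at most $\min(\beta,n)$, of dimension $n\beta-\binom\beta2$ when $\beta\le n$ (see \cite[Lemma~6.2]{bernstein2020typical}) and all of $\cc^{n\times n}$'s symmetric part otherwise, and $S=\psi(M_1)\in\operatorname{im}\psi$, every nonempty fibre of $\psi$ has codimension at most $n\beta-\binom\beta2$ (for $\beta\le n$); intersecting such a fibre with the coordinate subspace $L_1\cong\cc^m$ does not increase this bound, and adjoining the column-norm equations---one redundant---costs at most $\beta-1$ more, so the fibre has codimension at most $n\beta-\binom\beta2+\beta-1$ in $\cc^m$. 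Thus $0$-dimensionality forces $m\le n\beta-\binom\beta2+\beta-1$, which rearranges to $\binom{n-\beta+1}{2}\le n-\alpha$ (vacuous when $\beta\ge n$, in which case $\alpha\le n\le\beta$ anyway); writing $\beta=n-s$ with $1\le s\le n-2$ gives $n-\alpha\ge\binom{s+1}{2}\ge s$, hence $\alpha\le n-s=\beta$.

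The main obstacle is making these two codimension estimates airtight: verifying the identification of $\pi_{E'}^{-1}(\pi_{E'}(M))$ with the stated $W_1$-completion variety and keeping the ambient dimension together with the counts of ``trivial'' versus ``nontrivial'' matrix equations exact; checking that the only linear relation among the defining equations that survives after restricting to the fibre is the single Frobenius one, so that no further codimension is lost; and handling the input from $\psi$ with care---that $\overline{\operatorname{im}\psi}$ has the claimed dimension, that $S$ genuinely lies in $\operatorname{im}\psi$, and that the cases $\beta<n$ and $\beta\ge n$ are both covered. Once the inequalities $\binom{n-\alpha+1}{2}\le n-\alpha$ and $\binom{n-\beta+1}{2}\le n-\alpha$ are established, combining them with $\alpha\le n$ yields (1) and (2); and since (2) together with $\alpha\in\{n-1,n\}$ bounds $\alpha+\beta$ and $m$ in terms of $n$, only finitely many graphs can occur as $\core_2(G_E)$, which is the finiteness statement invoked in the proof of Theorem~\ref{theorem:onlyTwoCoreMatters}.
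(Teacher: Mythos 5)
Your proposal is correct and follows essentially the same route as the paper: reduce to the $2$-core, count the defining equations of the generic fiber $\pi_{E'}^{-1}(\pi_{E'}(M))$ against the number $\alpha+\beta+\binom{n}{2}-1$ of free entries, use the dimension $n\beta-\binom{\beta}{2}$ of the rank-$\le\beta$ symmetric matrices for the bound $\alpha\le\beta$, and get $\beta\le\binom{n}{2}+\alpha-1$ from the degree sum over column vertices. The only (harmless) variation is that for $\alpha\le\beta$ you work with the full $n\times n$ Gram equation and derive $\binom{n-\beta+1}{2}\le n-\alpha$ directly, whereas the paper restricts to the $\alpha\times\alpha$ block and argues by contradiction from $\alpha>\beta$; both rest on the same cited lemma and yield the same conclusion.
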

\begin{proof}
	Let $M \in \Xnr$ be a generic finite unit norm tight frame
	 whose $(i,a)$ entry is $m_{ia}$.
	Let $\tilde g_a$ and $\tilde f_{ij}$ denote the polynomials obtained from $g_a$ and $f_{ij}$
	by setting $x_{ia} = m_{ia}$ when $(i,a) \in E$.
	Then $\pi_E^{-1}(\pi_E(M))$ can be viewed as the zero-dimensional variety in $\cc^{[n]\times[r]\setminus E}$ defined
	by the vanishing of the polynomials $\tilde g_a$, $1 \le a \le r$ and $\tilde f_{ij}$, $1 \le i \le j \le n$.

	First we show $\alpha=n-1$ or $\alpha=n$.
    Without loss of generality,
    assume that $\core_2(G_E)$ has row-vertices $1,\dots,\alpha$ and column vertices $1,\dots,\beta$.
    Let $F := \{(i,a): 1 \le i \le \alpha, 1 \le a \le \beta\} \setminus E$ be the edge set of $\gcore_2(G_E)$. 
    {This set has cardinality given by  $|F| = \alpha + \beta + \binom{n}{2} - 1$.}
    The elements of $F$ index the entries in the upper-left submatrix of $M$ that are,
    in principle,
    allowed to vary over the fiber $\pi_E^{-1}(\pi_E(M))$.
    After dropping one of the redundant $\tilde g_a$'s via \eqref{eq:dropG},
    there are exactly $\binom{\alpha + 1}{2} + \beta - 1 + \alpha(n-\alpha)$
    equations among the $\tilde g_a$'s and $\tilde f_{ij}$'s that involve entries in the upper-left $\alpha \times \beta$ block of $M$.
    Since $\pi_E^{-1}(\pi_E(M))$ is zero-dimensional,
    we must have
    \begin{equation*}
        \alpha + \beta + \binom{n}{2} - 1 \le \binom{\alpha + 1}{2} + \beta - 1 + \alpha(n-\alpha)
    \end{equation*}
    which simplifies to
    \begin{equation}\label{eq:abInequality}
        -\frac{1}{2}\alpha^2 + \left(n-\frac{1}{2}\right)\alpha - \binom{n}{2} \ge 0.
    \end{equation}

    Let us now consider the left-hand side of the inequality \eqref{eq:abInequality} as a polynomial $h$ in $\alpha$,
    treating $n$ as a constant.
    The only roots of $h$ are $n-1$ and $n$, and $h(\alpha)$ is nonnegative if and only if $n-1\le\alpha\le n$.
    Therefore, we must have $\alpha\in\{n-1,n\}$.

    Now we show $\alpha \le \beta$.
    As noted in the proof of Proposition \ref{prop:atMostTwoUnknownColumns},
    when $(i,a)$ is an edge in $G_E$ but not $\gcore_2(G_E)$,
    we may solve a zero-dimensional quadratic system for $x_{ia}$ given $\{m_{ia}: (i,a) \in E\}$.
    Thus we may now assume that $G_E = \gcore_2(G_E)$ and allow $E$ to be spanning in $\Xnr$ (as opposed to a basis of $\Xnr$).

    Assume for the sake of contradiction that $\alpha > \beta$.
    Then,
     the $\binom{\alpha+1}{2}$ constraints $\tilde f_{ij} = 0$ where $1 \le i \le j \le \alpha$
    together can contribute at most $\alpha\beta-\binom{\beta}{2}$ to codimension.
    This is because for $1 \le i \le j \le \alpha$,
    the polynomials from \eqref{eq:symmetricMatrixLowRank} with $k = \beta$
    give the entries of an $\alpha\times \alpha$ symmetric matrix with rank at most $\beta$,
    and the dimension of the variety of $\alpha\times\alpha$ symmetric matrices of rank at most $\beta$ is $\alpha\beta-\binom{\beta}{2}$
    (see for example \cite[Lemma~6.2]{bernstein2020typical}).
    Also, as before, at least one of the constraints $\tilde g_a = 0$, $1 \le a \le \beta$ is redundant.
    Since $\pi_E^{-1}(\pi_E(M))$ is zero-dimensional,
    we must have $|F| \le \alpha\beta-\binom{\beta}{2} + \beta -1$
    and therefore
    \begin{equation}\label{eq:alphaBetaInequality}
    	\alpha(\beta-1) - \binom{\beta}{2} - \binom{n}{2} \ge 0.
    \end{equation}
    After plugging in $n-1$ for $\alpha$,  \eqref{eq:alphaBetaInequality} becomes $(\beta - n)^2 + n + \beta \le 2$,
    which is a contradiction because $n \ge 3$.
    Plugging in $n$ for $\alpha = n$ in
    \eqref{eq:alphaBetaInequality}, we get the inequality $-n-\beta \ge (\beta - n)^2$,
    which is a contradiction because the left hand side is strictly negative, and the right hand side is nonnegative.
    Hence, we have $\alpha\le \beta$.

    The final inequality $\beta \le \binom{n}{2} + \alpha - 1$ follows from the fact that $\core_2(G_E)$ has $\binom{n}{2} + \alpha + \beta - 1$ edges
    and each of the $\beta$ non-isolated column vertices has degree at least~$2$.
\end{proof}

\section{Degree of projection and algebraic identifiability }\label{sec:degrees}

Now that we have a handle on which subsets $E \subseteq [n]\times [r]$
yield projections $\pi_E: \Xnr \rightarrow \cc^E$ that are generically finite-to-one,
we can ask about the cardinality of a generically finite fiber.
In other words, we want to solve the following problem.
\begin{problem}[Algebraic identifiability complexity]\label{pr:algIdentifiable}
Develop a combinatorial method for computing the degree of the map $\pi_E:\Xnr \rightarrow \cc^E$
from $G_E$ when $E$ is a basis of $\Xnr$.
\end{problem}
The following theorem gets us part of the way towards a solution to Problem~\ref{pr:algIdentifiable}.

\newcommand{\clGE}{{\bar E}}
\begin{thm}\label{thm:degreeOfProjection}
    Let $r \ge n+2 > 4$ and let $E \subseteq [n]\times [r]$.
    Define $F \subseteq[n]\times [r]$  such that $\gcore_2(G_E) = G_F$,
    and let $k$ denote the number of vertices that are isolated in $\gcore_2(G_E)$ but not in $G_E$.
	If $E$ is a spanning set of $\Xnr$, then we have
	\[
		\deg \pi_E=2^k \cdot\deg \pi_F.
	\]
\end{thm}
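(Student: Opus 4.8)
The plan is to mimic the fiber-counting argument from the proof of Theorem~\ref{theorem:onlyTwoCoreMatters}, keeping careful track of the number of solutions at each step rather than merely noting finiteness. Let $M \in \Xnr$ be a generic finite unit norm tight frame with entries $m_{ia}$, and let $\tilde g_a$, $\tilde f_{ij}$ denote the polynomials obtained from $g_a$, $f_{ij}$ by substituting $x_{ia} = m_{ia}$ for $(i,a) \in E$. Then $\pi_E^{-1}(\pi_E(M))$ is the zero-dimensional variety in $\cc^{[n]\times[r]\setminus E}$ cut out by these polynomials, and $\deg \pi_E = |\pi_E^{-1}(\pi_E(M))|$ for generic $M$.

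First I would isolate the ``easy'' coordinates: the $k$ vertices that become isolated in passing from $G_E$ to $\gcore_2(G_E)$ correspond to removing a sequence of degree-one vertices, i.e. there is an ordering $(i_1,a_1),\dots,(i_k,a_k)$ of the edges of $G_E$ not in $\gcore_2(G_E)$ such that, for each $j$, one of $\tilde g_{a_j}$ or $\tilde f_{i_j i_j}$ involves only the variable $x_{i_j a_j}$ together with variables $x_{i_l a_l}$ for $l < j$ and entries of $M$ already fixed. Each such equation is, as a polynomial in the one new variable $x_{i_j a_j}$, a nondegenerate quadratic once the earlier variables have been assigned generic values — the leading coefficient of $x_{i_j a_j}^2$ is $1$ in $\tilde g_{a_j}$ and is another (now-constant, generically nonzero) matrix entry in $\tilde f_{i_j i_j}$, and genericity of $M$ ensures the discriminant does not vanish. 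Hence at each of the $k$ stages we have exactly $2$ choices, contributing a factor of $2^k$; this must be justified by noting that once these $2^k$ partial assignments are made, the remaining system is precisely the defining system of $\pi_F^{-1}(\pi_F(M'))$ for the corresponding point $M'$, because the variables $x_{i_j a_j}$ do not appear in any equation other than the one used to solve for them.

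Second I would argue that summing over fibers matches: every point of $\pi_E^{-1}(\pi_E(M))$ restricts to a point of $\pi_F^{-1}(\pi_F(M))$ (forget the $k$ easy coordinates), and conversely each of the $\deg \pi_F$ points in the generic fiber of $\pi_F$ lifts, via the $k$ successive quadratics, to exactly $2^k$ points of $\pi_E^{-1}(\pi_E(M))$. This gives $\deg \pi_E = 2^k \deg \pi_F$, provided $E$ is spanning, which guarantees both fibers are finite so the counts make sense. The main obstacle is the genericity bookkeeping: one must be sure that the generic $M \in \Xnr$ is chosen \emph{before} the stepwise solving, so that at every stage the quadratic in the new variable is genuinely nondegenerate (nonzero leading coefficient and nonzero discriminant) uniformly over all $\deg\pi_F$ choices of the ``core'' coordinates; the cleanest way is to observe that the leading coefficients are monomials in fixed entries of $M$ and the discriminants are polynomials in the core coordinates and entries of $M$ that do not vanish identically on the (irreducible) variety $\Xnr$, hence are nonzero for generic $M$. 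Once this is set up, the recursion follows by induction on $k$, peeling off one degree-one vertex at a time and invoking the $k=0$ case (where $\gcore_2(G_E) = G_E$ and there is nothing to prove).
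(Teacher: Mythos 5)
Your overall strategy---peel off the $k$ edges of $G_E$ outside $\gcore_2(G_E)$ one at a time via quadratics with two roots each, and reduce to the fiber of $\pi_F$---is the same decomposition the paper uses (there it is phrased as $\pi_E=h\circ\pi_F$ with $h\colon\pi_F(\Xnr)\to\cc^E$ the map forgetting the $F\setminus E$ coordinates, and $\deg h=2^k$ is proved by exactly your back-substitution). The genuine gap is in how you get the product formula. Your count amounts to $\deg\pi_E=\sum_{M'}\lvert\pi_F^{-1}(M')\rvert$, the sum running over the $2^k$ partial completions $M'\in\cc^F$ of $\pi_E(M)$ produced by the back-substitution. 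Genericity of $M$ guarantees $\lvert\pi_F^{-1}(M')\rvert=\deg\pi_F$ only for the one branch $M'=\pi_F(M)$; the other $2^k-1$ points are algebraic conjugates of it and could a priori land in the branch locus of $\pi_F$, where the fiber is smaller or empty. Your ``genericity bookkeeping'' paragraph addresses nondegeneracy of the discriminants, which is a different issue: it guarantees the $2^k$ points $M'$ are distinct, not that their $\pi_F$-fibers all have full cardinality. The missing step is the paper's covering-space argument (restrict to a dense Zariski-open set over which both $\pi_E$ and $h$ are covering maps; then $\pi_F$ is a map of covering spaces and degrees multiply), or equivalently the observation that the branch locus $B$ of $\pi_F$ is a proper closed subset of $\overline{\pi_F(\Xnr)}$ and $h$ is generically finite, so $\overline{h(B)}$ is a proper closed subset of $\overline{\pi_E(\Xnr)}$ that a generic $\pi_E(M)$ avoids.

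Two of your supporting claims are also false as stated, though both are repairable. First, ``the variables $x_{i_ja_j}$ do not appear in any equation other than the one used to solve for them'' is wrong: $x_{i_ja_j}$ appears in $\tilde g_{a_j}$, in $\tilde f_{i_ji_j}$, and in every $\tilde f_{i_jl}$ with $l\ne i_j$. What is true is that the \emph{designated} equation at stage $j$ involves only $x_{i_ja_j}$ and previously solved variables, and that after substituting all $k$ solved values into the remaining equations one recovers exactly the defining system of $\pi_F^{-1}(M')$. Second, ``every point of $\pi_E^{-1}(\pi_E(M))$ restricts to a point of $\pi_F^{-1}(\pi_F(M))$'' cannot be right: it would force every completion to agree with $M$ on $F\setminus E$, i.e.\ $\deg\pi_E=\deg\pi_F$, contradicting the very formula you are proving whenever $k>0$. (A minor point: the coefficient of $x_{i_ja_j}^2$ in $\tilde f_{i_ji_j}$ is $1$, not another matrix entry, since $f_{ii}=\sum_a x_{ia}^2-r/n$.)
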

\begin{proof}
	First observe, $E \subseteq F$ and so we have a projection map $h: \pi_F(\Xnr) \rightarrow \cc^E$
	that omits all the coordinates corresponding to elements of $F \setminus E$.
	Then we have $\pi_E= h\circ\pi_F$ and
    \begin{equation*}
    	\deg \pi_E=\deg \pi_F \cdot \deg h,
    \end{equation*}
    which can be seen as follows.
    The maps $h: \pi_F(\Xnr) \rightarrow \cc^E$ and $\pi_E: \Xnr \rightarrow \cc^E$ are each branched covers of $\cc^E$.
     In other words, there exist dense Zariski open subsets $U_1$ and $U_2$ of $\cc^E$ such that $h$ restricted to $h^{-1}(U_1)$ and $\pi_E$ restricted to $\pi_E^{-1}(U_2)$ are covering spaces.
     Moreover,  $h$ and $\pi_E$ restricted to $U_1\cap U_2$ are also covering spaces.
     Since $\pi_E= h\circ\pi_F$, we have $\pi_F$ is a homomorphism of covering spaces and thus
     the topological degree of the restricted maps with image $U_1\cap U_2$ satisfy $\deg \pi_E=\deg \pi_F \cdot \deg h$. As $U_1$ and $U_2$ are  dense Zariski open subsets of $\cc^E$, we have $U_1\cap U_2$ is also a dense Zariski open subset of $\cc^E$. Therefore, the equality above follows.

    It now suffices to show $\deg h = 2^k$.
    Let $M \in \Xnr$ be generic and
    define $\tilde g_a$ and $\tilde f_{ij}$ as in the proofs of Theorem \ref{theorem:onlyTwoCoreMatters} and Proposition \ref{prop:technicalStuff}.
    We can order the elements of $F\setminus E$ as $(i_1,a_1),\dots,(a_k,i_k)$
    such that for each $j \in \{1,\dots,k\}$,
    the only non-$x_{i_j a_j}$ variables in at least one of $g_{a_j}$ or $f_{i_ji_j}$ will be of the form $x_{i_la_l}$ with $l < j$.
{
    Let $\calf$ denote the system of all such polynomials.
    The non-constant coordinates of $h^{-1}(\pi_E(M))$ are given by the variety defined by the vanishing of $\calf$.
    }
    By solving $\calf$ via ``back-substitution'' in the order $x_{a_1i_1},\dots,x_{a_ki_k}$,
    we see that this variety has exactly $2^k$ points.
    Thus, $| h^{-1}(f(M))|=2^k$ and $\deg h=2^k$.
\end{proof}

Let $E \subseteq [n]\times [r]$ with $E$ a basis of $\Xnr$
and define $F \subseteq [n] \times [r]$ so that  $G_F = \gcore_2(G_E)$.
Let $\beta$ denote the number of column vertices in $\core_2(G_E)$
and let $M \in \Xnr$ be generic whose $(i,a)$ entry is $m_{ia}$.
If $r \ge n+2 > 4$ and $r \ge \beta+1$,
then the degree of the projection map $\pi_F: \Xnr \rightarrow \cc^F$ only depends on $\core_2(G_E)$ and not on $r$.
This follows from the fact that if $r \ge \beta+1$,
then the set of non-constant polynomials $\tilde f_{ij}$ and $\tilde g_a$ obtainable by substituting $x_{ia} = m_{ia}$
for $(i,a) \in F$ does not depend on $r$.
So for a graph $H$ such that $H = \core_2(G_E)$ for some basis $E$ of $\Xnr$,
let $\deg(H)$ denote the degree of $\pi_F$ when $r \ge \beta+1$.
{
If $r = \beta$ ($r < \beta$ is not possible),
then $\deg(\pi_F) \le \deg(\core_2(G_E))$.}
Thus Theorem \ref{thm:degreeOfProjection} gives us the bound $\deg(\pi_E) \le 2^k \deg(\core_2(G_E))$.

Theorem \ref{theorem:onlyTwoCoreMatters} tells us, that for fixed $n$,
there are only \emph{finitely many} $\core_2(G_E)$.
Thus one can compute all values of $\deg(\core_2(G_E))$ for a fixed $n$
and use this to produce an algorithm that bounds the size of a finite fiber $|\pi_E^{-1}(\pi_E(M))|$
by computing the $2$-core of $G_E$. This is done in Algorithm~\ref{alg:fiberSize}
and Example \ref{ex:finiteFiberN3} illustrates this for the case $n = 3$.

\begin{algorithm}[H]
\caption{For fixed $n \ge 3$, bounds the size of a generic fiber $\pi_E^{-1}(\pi_E(M))$ when $E$ is a basis of $\Xnr$.
Assumes that all possible values of $\deg(\core_2(G_E))$ have been precomputed.}\label{alg:fiberSize}
\begin{algorithmic}[1]
	\Procedure{BoundFiber}{$r,E$} \Comment{$r \ge n+2$ and $E$ is a basis of $\Xnr$}
	\State{$H \gets \core_2(G_E)$}
	\State{$k \gets$ number of vertices in $G_E$ but not $H$}
	\State{$d \gets \deg(H)$}\Comment{obtain by looking up in precomputed table}
	\State{\textbf{Return:} $d\cdot 2^k$}
	\EndProcedure
\end{algorithmic}
\end{algorithm}

\begin{example}\label{ex:finiteFiberN3}
When $E$ is a basis of $X_{3,r}$,
$\core_2(G_E)$ is one of seven graphs, displayed in Figure \ref{fig:sevenPossible}.
For each possible $\core_2(G_E)$, we compute the cardinality of a projection of an $X_{3,r}$ onto $\gcore_2(G_E)$
using probability-one methods in \texttt{Bertini}~\cite{BHSW06} via \texttt{Macaulay2}~\cite{M2,bertini4M2}
When $\core_2(G_E)$ has five or more column vertices, we took $r = 5$ and $r = 6$
and observed that in both cases, the degree of projection was the same.
When $\core_2(G_E)$ has fewer than five column vertices, we take $r = 5$.
These degrees are given in Table~\ref{tab:degreesOf2Core}.
Via the above discussion, this characterizes the possible degrees of a projection of $X_{3,r}$ onto a basis.
For example, if $E \subseteq [3]\times[5]$
where as a zero-one matrix,
\[
	E=\begin{pmatrix}
        0&0&0&0&0\\
        1&1&0&0&0\\
        1&1&1&0&0\\
    \end{pmatrix},
\]
then the degree of a generic fiber of this projection is $128=2^2\cdot 32$.
This can be read off from Table~\ref{tab:degreesOf2Core}
by noting that $F$ such that $G_F = \core_2(G_E)$ is given in the top row of the table and the degree of the corresponding fiber is $32$.

\begin{table}[htb!]
    \begin{center}
        \begin{tabular}{|c |c |}
        \hline
        $F$ such that $G_F = \core_2(G_E)$ & $|\deg(\core_2(G_E))|$ \\
        \hline
            $\begin{pmatrix}
                 0&0&0\\
                 0&0&0\\
                 1&0&0\\
            \end{pmatrix}$ & 32 \\\hline
            $\begin{pmatrix}
                 0&0&0&0\\
                 0&0&0&0\\
            \end{pmatrix}$ &24 \\\hline
            $\begin{pmatrix}
                 0&0&0&0\\
                 1&1&0&0\\
                 0&0&1&0\\
            \end{pmatrix}$ &96\\\hline
            $\begin{pmatrix}
                 0&0&1&0\\
                 0&1&0&0\\
                 1&0&0&0\\
            \end{pmatrix}$ &128 \\\hline
            $\begin{pmatrix}
                 1&1&1&0&0\\
                 0&0&0&1&1\\
                 0&0&0&0&0\\
            \end{pmatrix}$ &288 \\\hline
            $\begin{pmatrix}
                 0&0&1&1&0\\
                 1&1&0&0&0\\
                 0&0&0&0&1\\
            \end{pmatrix}$ &576 \\\hline
            $\begin{pmatrix}
                 0&0&1&1&1\\
                 0&1&0&0&0\\
                 1&0&0&0&0\\
            \end{pmatrix}$ &384 \\
        \hline
    \end{tabular}
    \vspace{5pt}\caption{Each possible value of $F \subseteq [3]\times [5]$ such that $G_F = \core_2(G_E)$ when $E$ is a basis of $X_{3,r}$,
    alongside the degree of the corresponding coordinate projection map.}\label{tab:degreesOf2Core}
    \end{center}
\end{table}
\end{example}

\section*{Acknowledgments}
We are very thankful for the reviewers' comments to improve this article. 
This work was partially supported by the grants NSF CCF-1708884 and NSA H98230-18-1-0016.
Daniel Irving Bernstein was supported by an NSF Mathematical Sciences Postdoctoral Research Fellowship (DMS-1802902).
He also completed some of this work while employed by the NSF-supported (DMS-1439786)
Institute for Computational and Experimental Research in Mathematics in Providence, RI,
during the Fall 2018 semester program on nonlinear algebra.
Cameron Farnsworth's research was supported in part by the National Research Foundation of Korea (Grant Number 20151009350).
Jose Israel Rodriguez was partially supported by the College of Letters and Science, UW-Madison.

\bibliographystyle{abbrv}
\bibliography{framebib}{}

\end{document}